\numberwithin{equation}{section}
\newtheorem{theorem}{Theorem}[section]
\newtheorem{proposition}[theorem]{Proposition}
\newtheorem{lemma}[theorem]{Lemma}
\newtheorem{corollary}[theorem]{Corollary}
\theoremstyle{definition}
\newtheorem{definition}[theorem]{Definition}
\newtheorem{example}[theorem]{Example}
\theoremstyle{remark}
\newtheorem{remark}[theorem]{Remark}
\newcommand{\ShortExactSeq}[3]%
{0\to{#1}\to{#2}\to{#3}\to 0}
\newcommand{\DShortExactSeq}[3]%
{\xymatrix@1{0\ar[r]&{{#1}}\ar[r]&{{#2}}\ar[r]&{{#3}}\ar[r]&0}}
\newcommand{\Ad}{\operatorname{\rm Ad}}
\newcommand{\Bounded}{{\frak B}}
\newcommand{\Compacts}{{\frak K}}
\newcommand{\Dlim}{\mbox{$\lim\limits_\rightarrow$}}
\newcommand{\R}{\mbox{$\Bbb R$}}
\newcommand{\Supp}{\operatorname{\rm Supp}}
\renewcommand{\le}{\leqslant}
 \renewcommand{\phi}{\varphi}
\renewcommand{\epsilon}{\varepsilon}
\newcounter{ritmctr}
{\end{itemize}}
\newcounter{aitmctr}
{\end{itemize}}
\begin{document}
\bibliographystyle{plain}

\title{Sheaf theory and Paschke duality}

\author{John Roe}
\address{Department of Mathematics, Penn State University, University
Park PA 16802}
\author{Paul Siegel}
\address{Department of Mathematics, Columbia University, 2990 Broadway, New York, NY 10027}
\date{\today}
\maketitle

\section{Introduction}
Let $X$ be a locally compact, metrizable space and let $H$ be an \emph{$X$-module} --- that is, a Hilbert space equipped with a representation $\rho$ of $C_0(X)$.  An operator $T\in\Bounded(H)$ is called \emph{pseudolocal} if $T\rho(f)-\rho(f)T$ is compact for all $f\in C_0(X)$, and it is called \emph{locally compact} if $T\rho(f)$ and $\rho(f)T$ individually are compact for all $f\in C_0(X)$.   Plainly, the pseudolocal operators form a $C^*$-algebra, and the locally compact operators form an ideal in this $C^*$-algebra.  We will denote these by ${\mathfrak D}(X)$ and ${\mathfrak C}(X)$ respectively, and we will denote by ${\mathfrak Q}(X)$ the quotient ${\mathfrak D}(X)/{\mathfrak C}(X)$.

Since the fundamental works of Atiyah~\cite{At7}, Brown-Douglas-Fillmore~\cite{BDF2} and Kasparov~\cite{Kas1}, it has been clear that the algebra extension
\[ 0 \to {\mathfrak C}(X) \to {\mathfrak D}(X) \to {\mathfrak Q}(X) \to 0 \]
functions as an abstract counterpart to the extension associated to the zero'th order pseudodifferential operators on a compact manifold $M$,
\[ 0 \to \Compacts \to \Psi(M) \to C(S^*M) \to 0. \]
Here $\Psi(M)$ is the $C^*$-algebra generated by the zero'th order pseudodifferential operators acting on $L^2(M)$, $S^*M$ is the unit cosphere bundle of $M$, and $\sigma\colon \Psi(M) \to C(S^*M)$ is the symbol map --- the map that takes a pseudodifferential operator to its principal symbol.

The algebra ${\mathfrak Q}(X)$ is not commutative, but it shares some of the properties of the symbol algebra $C(S^*M)$.  In particular, it is localizable: given an open subset $U$ of $X$, it makes sense to restrict an element of ${\mathfrak Q}(X)$ to $U$, and it also makes sense to glue together such restrictions on overlapping open sets.. Moreover, these considerations have been important in the study of index theory and K-homology.  The correct language in which to discuss such localization and gluing is, of course, that of sheaf theory~\cite{godement_topologie_1998}.  The purpose of this note is to study ${\mathfrak Q}(X)$ systematically from a sheaf-theoretic perspective.

\section{Sheaves of $C^*$-algebras}

Let $X$ be a locally compact and metrizable space.  The standard notations $C_0(X)$ and $C_b(X)$ will be used for the $C^*$-algebras of continuous functions on $X$ which vanish at infinity (respectively, are uniformly bounded).

In this section we will review some basic facts about sheaves of $C^*$-algebras.  Most if not all of this material is in the literature already (see~\cite{ara_sheaves_2010} and the other references in the appendix to this section) but is included here for completeness.

\begin{definition} A \emph{presheaf of $C^*$-algebras} over $X$ is a contravariant functor from the category of open subsets of $X$ and inclusions to the category of $C^*$-algebras.  Similarly we may speak of a presheaf of \emph{unital} $C^*$-algebras. \end{definition}

\begin{example}\label{psbf} The assignment $U \mapsto C_b(U)$ defines a presheaf of unital $C^*$-algebras. \end{example}

As usual, a \emph{sheaf} of $C^*$-algebras is a presheaf which ``can be reconstructed from local data''.   However, this notion  needs to be correctly interpreted to incorporate the analysis---this is not a sheaf of algebras in the classical sense of~\cite{godement_topologie_1998}.  Rather, it is a sheaf \emph{in the category of $C^*$-algebras.} We adopt the following  definition from~\cite{ara_sheaves_2010}.

\begin{definition}\label{ssdef} A \emph{sheaf of $C^*$-algebras} over  $X$ is a presheaf $\mathfrak A$ of $C^*$-algebras such that ${\mathfrak A}(\emptyset) = 0$ and such that, whenever $U = \bigcup U_j$ is a union of open subsets of $X$,
 the following additional axioms are satisfied:
\begin{enumerate}[(i)]
\item (Uniqueness) If $a\in {\mathfrak A}(U)$ and $a_{|U_j} = 0$ for all $j$, then $a=0$.
\item (Gluing) If $a_j \in {\mathfrak A}(U_j)$ is a \emph{bounded} family for which $a_{j|U_i} = a_{i|U_j}$ for all $i,j$, then there exists $a\in{\mathfrak A}(U)$ such that $a_{U_j} = a_j$ for all $j$.  Moreover, $\|a\| \le \sup_j \|a_j\|$.
\end{enumerate}
(This differs from the classical algebraic notion of sheaf by the \emph{boundedness} requirement appearing in the gluing axiom.) \end{definition}

The axioms can be expressed more categorically by saying that, whenever $U=\bigcup U_j$, the algebra ${\mathfrak A}(U)$ is the inverse  limit (in the category of $C^*$-algebras) of the diagrams ${\mathfrak A}(U_i) \to {\mathfrak A}(U_i\cap U_j)$, as $i$ and $j$ vary.  Beware that an ``inverse limit in the category of $C^*$-algebras'' is \emph{not} the same as an ``inverse limit of $C^*$-algebras'' or ``pro-$C^*$-algebra'' in the sense of Phillips~\cite{Phil88}; these latter objects are inverse limits of $C^*$-algebras \emph{in the category of topological algebras}.

\begin{example} It is easy to see that the presheaf of bounded continuous functions on $X$ (Example~\ref{psbf}) is in fact a sheaf. \end{example}

\begin{definition} We will say that a sheaf $\mathfrak A$ of unital $C^*$-algebras is a \emph{special} sheaf if it is a sheaf of (unital) modules over the sheaf of bounded continuous functions --- that is to say, if ${\mathfrak A}(U)$ is a module over $C_b(U)$ for every open set $U$, with the constant function 1 acting as the identity, and the natural compatibility relations hold. \end{definition}

All the examples that we shall consider in this paper will be special sheaves. In fact, it is apparently unknown whether there exist any non-special sheaves of $C^*$-algebras (see~\cite[Section 5]{ara_sheaves_2010}, where our ``special'' sheaves are called \emph{$\mathfrak C$-sheaves}).  Special sheaves can be identified with several other notions of ``continuous family of $C^*$-algebras'' over $X$; we discuss this briefly in the appendix to this section, but we won't make use of those results in the main text.

\begin{lemma}\label{extl} Suppose that ${\mathfrak A}$ is a special sheaf over $X$.  Let $U,V$ be open subsets of $X$ with $\bar{V}\subseteq U$.  Then for any $a\in{\mathfrak A}(U)$ there exists $a'\in{\mathfrak A}(X)$ with $a_{|V} = a'_{|V}$ and $\|a'\|\le \|a\|$. \end{lemma}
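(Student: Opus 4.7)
The plan is to multiply $a$ by a scalar cutoff function supported in $U$ and identically $1$ on $\bar V$, and then use the Gluing axiom to extend the result by zero outside $U$.

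For the cutoff, I would first invoke Urysohn's lemma (available because metrizable spaces are normal) applied to the disjoint closed sets $\bar V$ and $X\setminus U$, obtaining $\phi_0\in C_b(X)$ with $0\le\phi_0\le 1$, $\phi_0\equiv 1$ on $\bar V$, and $\phi_0\equiv 0$ on $X\setminus U$. I would then replace $\phi_0$ by $\phi:=\max(2\phi_0-1,0)\in C_b(X)$, so that $\|\phi\|_\infty\le 1$ and $\phi\equiv 1$ on $\bar V$, but in addition $\phi$ vanishes on the \emph{open} set $W:=\{x\in X:\phi_0(x)<1/2\}$, which contains $X\setminus U$. In particular $\{U,W\}$ is an open cover of $X$.

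Since $\mathfrak A$ is special, $C_b(U)$ acts on $\mathfrak A(U)$ by a contractive, restriction-compatible module action. Setting $a_0:=\phi|_U\cdot a\in\mathfrak A(U)$, we have $\|a_0\|\le\|a\|$, and because $\phi$ vanishes on $U\cap W$ the restriction $a_0|_{U\cap W}$ is zero. Hence $(a_0,0)\in\mathfrak A(U)\times\mathfrak A(W)$ is a bounded family that agrees on overlaps, so the Gluing axiom of Definition~\ref{ssdef} produces $a'\in\mathfrak A(X)$ with $a'|_U=a_0$, $a'|_W=0$, and $\|a'\|\le\max(\|a_0\|,0)\le\|a\|$. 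Restricting to $V\subseteq U$ and using $\phi|_V\equiv 1$ then gives $a'|_V=\phi|_V\cdot a|_V=a|_V$, as required.

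The one subtlety I anticipate is that the cutoff must vanish on an open \emph{neighborhood} of $X\setminus U$ rather than merely on $X\setminus U$ itself, so that $\{U,W\}$ is genuinely an open cover to which the Gluing axiom applies; the replacement $\phi_0\mapsto\max(2\phi_0-1,0)$ is the standard device achieving this. Beyond that, the argument uses only the Gluing axiom together with the contractivity and restriction-compatibility of the $C_b$-module action built into the definition of a special sheaf.
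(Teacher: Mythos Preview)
Your proof is correct and follows essentially the same approach as the paper's: multiply $a$ by a cutoff in $C_b(U)$ that is $1$ on $\bar V$ and supported in $U$, then glue with $0$ over the complement of the support. You are more explicit than the paper about ensuring the cutoff vanishes on an \emph{open} neighborhood of $X\setminus U$ (via the $\max(2\phi_0-1,0)$ device), which is exactly the point the paper leaves implicit in writing ``$\Supp(g)\subseteq U$''.
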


\begin{proof} Choose $g\in C_b(U)$ with $0\le g\le 1$, $\Supp(g)\subseteq U$ and $g=1$ on $\bar{V}$.  Let $W$ be the complement of $\Supp(g)$.  Apply the gluing axiom to the elements $g a \in {\mathfrak A}(U)$ and $0\in{\mathfrak A}(W)$.  \end{proof}

The \emph{stalk} of a sheaf (or presheaf) $\mathfrak A$ at $x\in X$ is defined as usual by
\[ {\mathfrak A}(x) = \Dlim \{ {\mathfrak A}(U): \mbox{$U$ open, $U\ni x$} \}, \]
the direct limit ($=$ colimit) of course being in the category of $C^*$-algebras.  For special sheaves, this is equivalent to a more concrete definition.  Suppose that $\mathfrak A$ is special, let $U$ be any open set containing $x$, and let $A = {\mathfrak A}(U)$.  Let $I\lhd C_b(U)$ be the ideal of functions vanishing at $x$.  By the Cohen-Hewitt factorization theorem, $IA$ is a closed ideal in $A$.  If $a\in  IA$, then for each $\epsilon>0$ there is an open $U\ni x$ such that $\|a_{|U}\|<\epsilon$, and so (by definition of the $C^*$-algebraic direct limit) the natural $*$-homomorphism $A \to {\mathfrak A}(x)$ vanishes on $IA$.  Thus  we obtain a $*$-homomorphism
\begin{equation}\label{stalk}\alpha\colon  A/IA \to {\mathfrak A}(x). \end{equation}
\begin{lemma} For any special sheaf $\mathfrak A$, the $*$-homomorphism $\alpha$ described in Equation~\ref{stalk} is an \emph{isomorphism}. \end{lemma}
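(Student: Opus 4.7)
The plan is to prove surjectivity and injectivity of $\alpha$ separately. Surjectivity uses the extension lemma; injectivity follows from a cutoff argument exploiting the norm bound implicit in the gluing axiom.

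For surjectivity, let $\beta \in {\mathfrak A}(x)$. Since the algebraic direct limit is norm-dense in the $C^*$-algebraic colimit, it suffices to realize classes represented by pairs $(V, b)$ with $b \in {\mathfrak A}(V)$ and $V \ni x$ open. Replacing $V$ by $V \cap U$, we may assume $V \subseteq U$. By local compactness and metrizability of $X$, choose an open $V' \ni x$ with $\bar{V'} \subseteq V$. Applying Lemma~\ref{extl} to $b$ (with the lemma's $U$ and $V$ playing the roles of our $V$ and $V'$) yields $b' \in {\mathfrak A}(X)$ agreeing with $b$ on $V'$; then $a := b'|_U \in A$ has the same germ at $x$ as $b$, so $\alpha(\bar{a}) = [b]$ in ${\mathfrak A}(x)$. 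Thus $\alpha$ has norm-dense image, and since the image of a $*$-homomorphism of $C^*$-algebras is closed, $\alpha$ is surjective.

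For injectivity, suppose $\alpha(\bar{a}) = 0$. By definition of the $C^*$-algebraic colimit this means $\inf\{\|a|_V\| : V \ni x \text{ open}, V \subseteq U\} = 0$, so choose $V_n \subseteq U$ open with $x \in V_n$ and $\|a|_{V_n}\| < 1/n$. By metrizability pick nested opens $W_n \subseteq \bar{W_n} \subseteq V_n$ with $x \in W_n$, and construct (by a ratio of distance functions in $U$) $h_n \in C_b(U)$ with $0 \le h_n \le 1$, $h_n = 1$ on $W_n$, and $\Supp(h_n) \subseteq V_n$. Then $g_n := 1 - h_n$ vanishes at $x$, so $g_n \in I$ and $g_n a \in IA$. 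The element $h_n a = a - g_n a \in A$ restricts to something of norm at most $\|a|_{V_n}\| < 1/n$ on $V_n$ and to zero on the open set $Y_n := U \setminus \Supp(h_n)$; since $\{V_n, Y_n\}$ covers $U$, combining the uniqueness and gluing axioms (the latter with its norm bound, applied to these restrictions) forces $\|h_n a\| \le 1/n$. Hence $a$ is the norm limit of the elements $g_n a \in IA$, and as $IA$ is closed, $a \in IA$, i.e., $\bar{a} = 0$.

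The main obstacle is the injectivity step, which must convert the vanishing of $a$'s germ at $x$ into actual membership in $IA$. The crucial ingredient is the norm bound in the gluing axiom (Definition~\ref{ssdef}(ii)), which forces the cutoff $h_n a$ to have small global norm given only that its restriction to $V_n$ is small; without this bound---the key difference from the classical sheaf axiom---the argument would break down.
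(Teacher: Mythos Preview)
Your proof is correct and follows essentially the same route as the paper's: surjectivity via the extension lemma plus closedness of the image of a $*$-homomorphism, and injectivity via a cutoff function in $I$ approximating the identity on $a$. The only difference is one of explicitness---you spell out how the norm bound in the gluing axiom forces $\|h_n a\|\le 1/n$, whereas the paper compresses this step into the phrase ``one easily produces $a'=\phi a$ \dots\ such that $\|a-a'\|<\epsilon$''.
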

\begin{proof} We must show that the map is both surjective and injective.

For surjectivity, consider the $*$-homomorphisms ${\mathfrak A}(U) \to {\mathfrak A}(x)$, as $U$ runs over a system of neighborhoods of $x$.  By definition of the direct limit, the union of the ranges of these homomorphisms is dense in ${\mathfrak A}(x)$.  But it follows from Lemma~\ref{extl} that this union is included in the range of $\alpha$.  Therefore $\alpha$ is surjective, since the image of a $*$-homomorphism is always closed.

For injectivity, suppose that $a\in {\mathfrak A}(X)$ maps to zero in ${\mathfrak A}(x)$. Then for each $\epsilon>0$ there is a neighborhood $U$ of $x$ such that $\|a_{|U}\|<\epsilon$.  Using the special structure, one easily produces $a'=\phi a$, where $\phi$ vanishes near $x$ and is 1 outside $U$, such that $\|a-a'\|<\epsilon$.  But clearly $a'\in IA$.  Since $IA$ is closed, the result follows.
  \end{proof}

As expected, an element of a sheaf is determined by its values on the stalks (for this reason we may refer to its as a ``section'' of the sheaf).

\begin{proposition}\label{locsec} Let $\mathfrak A$ be a sheaf of $C^*$-algebras over $X$ and let $a\in{\mathfrak A}(X)$.  For each $x\in X$ let $a(x)\in{\mathfrak A}(x)$ denote the value of $a$ at $x$.  If $a(x)=0$ for all $x\in X$, then $a=0$. \end{proposition}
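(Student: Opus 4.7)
The plan is to exploit two facts: the stalk at $x$ is the $C^*$-algebraic direct limit of $\mathfrak{A}(U)$ over neighborhoods $U$ of $x$, and the norm inequality in the gluing axiom forces $\|a\| \le \sup_j \|a_{|U_j}\|$ whenever $\{U_j\}$ is an open cover of $X$.

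First I would unpack what $a(x)=0$ means. Since $\mathfrak{A}(x)$ is formed as a colimit in the category of $C^*$-algebras, it is the completion of the algebraic colimit with respect to the seminorm $b \mapsto \inf_{V\ni x}\|b_{|V}\|$ (the infimum being over open neighborhoods of $x$, with $V$ small enough to lie in the set where $b$ is defined). Hence $a(x)=0$ in $\mathfrak{A}(x)$ is equivalent to $\inf_{V\ni x}\|a_{|V}\|=0$, i.e.\ for every $\epsilon>0$ there is an open neighborhood $V$ of $x$ with $\|a_{|V}\|<\epsilon$.

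Next, fix $\epsilon>0$. By the previous paragraph, for each $x\in X$ we can choose an open $U_x\ni x$ with $\|a_{|U_x}\|<\epsilon$. The family $\{U_x\}_{x\in X}$ is an open cover of $X$, and the restrictions $a_x := a_{|U_x}\in \mathfrak{A}(U_x)$ form a compatible bounded family whose pairwise restrictions agree on overlaps (they are all restrictions of $a$). By the uniqueness axiom, the element of $\mathfrak{A}(X)$ produced by the gluing axiom from this family is precisely $a$. The norm clause of the gluing axiom therefore gives
\[ \|a\| \le \sup_{x\in X}\|a_{|U_x}\| \le \epsilon. \]
Letting $\epsilon\to 0$ yields $\|a\|=0$, hence $a=0$.

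The only subtle point — and the place where an ordinary algebraic sheaf argument would fail — is that we cannot in general arrange $a_{|U_x}=0$ exactly (the direct limit in the category of $C^*$-algebras only controls the norm, not equality of representatives). This is exactly why we need the quantitative norm inequality built into Definition~\ref{ssdef}(ii); it converts the stalkwise vanishing, which is only an $\epsilon$-statement on each $U_x$, into a genuine global norm bound.
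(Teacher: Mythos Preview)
Your proof is correct and follows essentially the same approach as the paper's own argument: extract from stalkwise vanishing an open cover $\{U_x\}$ with $\|a_{|U_x}\|<\epsilon$, then invoke the norm clause of the gluing axiom to conclude $\|a\|\le\epsilon$. Your version is slightly more detailed---you make explicit the role of the uniqueness axiom in identifying the glued section with $a$ itself, which the paper leaves implicit---but the underlying idea is identical.
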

\begin{proof} Let $\epsilon>0$.  The hypothesis implies that each $x\in X$ has a neighborhood $U_x$ such that $\|a_{|U_x}\|<\epsilon$.  By the gluing axiom, $\|a\|\le \epsilon$.  Since $\epsilon$ is arbitrary, $a=0$. \end{proof}

\begin{remark} It follows from this proposition that
the obvious $*$-homomorphism
\[ {\mathfrak A}(X) \to  \prod_{x\in X} {\mathfrak A}(x) \]
is injective.
Since an injective $*$-homomorphism is isometric, we find that for any section $a\in{\mathfrak A}(X)$,
\[ \|a\| = \sup \{ \|a(x)\|: x\in X\}. \]
We will need this identity in a moment.  Notice in particular the following consequence (called \emph{local convexity}): if $\mathfrak A$ is a \emph{special} sheaf, $\{\phi_j\}$ is a finite partition of unity, and $a_j \in {\mathfrak A}(X)$, then
\[ \left\| \sum \phi_j a_j \right\| \le \max \left\{ \|a_j\| \right\}. \]
The local convexity property was introduced in~\cite{hofmann_bundles_1977}.
 \end{remark}

We can glue local sections of special sheaves using a partition of unity.

\begin{lemma}\label{puglue} Let ${\mathfrak A}$ be a special sheaf over $X$.  Let ${\mathscr U} = \{U_j\}$ be an open cover of $X$ and let $\{\phi_j\}$ be a locally finite continuous partition of unity subordinate to $\mathscr U$.  Suppose that $a_j\in \mathfrak{A}(U_j)$ is a uniformly bounded family.  Then the sum
\[ a = \sum \phi_j a_j \]
defines an element of ${\mathfrak A}(X)$ with $\|a\| \le \sup_j \|a_j\|$.  Moreover, if $\|a_{j|U_i} - a_{i|U_j}\|<\epsilon$ for all $i,j$, then $\|a_{|U_j} - a_j\|\le \epsilon$ for all $j$. \end{lemma}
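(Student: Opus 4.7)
The plan is to build $a$ in three stages: truncate each $a_j$ to a globally defined section using the special structure of ${\mathfrak A}$, assemble the resulting sections as a locally finite sum via the gluing axiom, and then read off the norm bounds stalkwise.

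First, for each $j$ I would construct a truncation $b_j \in {\mathfrak A}(X)$ that equals $\phi_j a_j$ on $U_j$ and vanishes off $\Supp(\phi_j)$. Here $\phi_j a_j$ lies in ${\mathfrak A}(U_j)$ by the $C_b(U_j)$-module structure, and because $\phi_j$ vanishes identically outside $\Supp(\phi_j)$, this element restricts to zero on $U_j \cap (X \setminus \Supp(\phi_j))$. The gluing axiom applied to the open cover $\{U_j, X \setminus \Supp(\phi_j)\}$ of $X$, with the bounded pair $(\phi_j a_j, 0)$, then yields the desired $b_j$ satisfying $\|b_j\| \le \|\phi_j a_j\| \le \|a_j\|$.

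Next, I would assemble $a = \sum_j b_j$ by invoking local finiteness. Each $x \in X$ has an open neighborhood $N_x$ meeting only finitely many supports $\Supp(\phi_j)$; on $N_x$ the sum $\sum_j b_j|_{N_x}$ has only finitely many nonzero terms and so defines an element $s_x \in {\mathfrak A}(N_x)$. At each $y \in N_x$ one computes $s_x(y) = \sum_j \phi_j(y) a_j(y)$, a convex combination at the stalk, so the stalkwise supremum formula from the remark after Proposition~\ref{locsec} gives $\|s_x\| \le \sup_j \|a_j\|$; in particular $\{s_x\}$ is uniformly bounded. On each overlap $N_x \cap N_y$ the two local sums are restrictions of the same finite collection of $b_j$'s and so agree, and a final application of the gluing axiom to the cover $\{N_x\}_{x \in X}$ produces $a \in {\mathfrak A}(X)$ with $a|_{N_x} = s_x$ and $\|a\| \le \sup_j \|a_j\|$.

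For the last claim, using $\sum_i \phi_i \equiv 1$, a pointwise computation on $U_j$ gives
\[ (a|_{U_j} - a_j)(x) = \sum_i \phi_i(x)\bigl(a_i(x) - a_j(x)\bigr), \]
with the sum ranging only over $i$ such that $x \in U_i \cap U_j$ (since $\phi_i$ vanishes elsewhere); each term has norm less than $\epsilon$ by hypothesis, and the convex-combination bound together with the stalkwise supremum formula yields $\|a|_{U_j} - a_j\| \le \epsilon$. I expect the main subtlety to lie in carefully setting up the two invocations of the gluing axiom—first to truncate, then to assemble the locally finite sum—particularly in verifying the uniform boundedness and pairwise agreement that the axiom requires; once $a$ is in hand, the norm inequalities are immediate from the stalkwise description of sections.
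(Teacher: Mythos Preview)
Your proof is correct and follows essentially the same strategy as the paper's: reduce to finite sums on small neighborhoods via local finiteness, bound these using the stalkwise norm formula (what the paper packages as ``local convexity''), and glue. The only cosmetic difference is that you first extend each $\phi_j a_j$ to a global section $b_j$ before restricting to the small neighborhoods, whereas the paper writes the local finite sums $a_W = \sum_j \phi_{j|W} a_{j|W}$ directly; your extra truncation step just makes explicit what the paper's notation leaves implicit.
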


\begin{proof} Let $M= \sup \|a_j\|$.  Each point of $X$ has a neighborhood $W$ that meets the support of only finitely many $\{\phi_j\}$.   Then the sum
\[ a_W = \sum_j \phi_{j|W} a_{j|W} \]
is a finite one and defines $a_W \in {\mathfrak A}(W)$; moreover, by local convexity, $\|a_W\| \le M$.  Let $\mathscr W$ be an open cover of $X$ by sets $W$ arising as above, and observe that if $W,W'\in\mathscr W$ then $a_{W|W'} = a_{W'|W}$.  Thus, by the gluing axiom, there is $a\in{\mathscr A}(X)$ restricting to $a_W$ on each $W$, and $\|a\|\le M$.  This is the interpretation of the sum appearing in the statement of the lemma.

It remains to prove the final sentence.  However, if $\|a_{j|U_i} - a_{i|U_j}\|<\epsilon$ for all $i,j$, then we may write
\[ a_{|U_j} - a_j = \sum_i \phi_i (a_i-a_j)_{|U_j} \]
which has norm $\le\epsilon$ by local convexity again.
 \end{proof}

\emph{Morphisms} of sheaves are defined in the usual way (as natural transformations of the underlying presheaves).  A morphism of sheaves gives rise to a morphism on each stalk.  In the presence of appropriate local continuity the converse is also true:

\begin{proposition} Let $\mathfrak A$ and ${\mathfrak A}'$ be sheaves of $C^*$-algebras over $X$, and suppose that for each $x$ there is given a $*$-homomorphism $\alpha(x)\colon {\mathfrak A}(x) \to {\mathfrak A}'(x)$.  Suppose also that there is an open cover $\mathscr U$ of $X$ such that for each $U\in\mathscr U$ there is a $*$-homomorphism $\alpha(U)\colon {\mathfrak A}(U)\to {\mathfrak A}'(U)$ such that $\alpha(x)$ is the germ of $\alpha(U)$ for all $x\in U$.  Then there is one and only one morphism of sheaves $\alpha\colon {\mathfrak A}\to {\mathfrak A}'$ whose germs are $\alpha(x)$. \end{proposition}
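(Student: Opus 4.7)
The plan is to construct a morphism $\alpha(W)\colon \mathfrak A(W)\to\mathfrak A'(W)$ for every open $W\subseteq X$ by patching together the given local maps $\alpha(U)$, and then to verify every required property (well-definedness, $*$-algebra structure, naturality in $W$, and matching germs) by comparing germs and invoking Proposition~\ref{locsec}. Uniqueness of the global morphism is then immediate from the same proposition.

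Given $a\in\mathfrak A(W)$, for each $x\in W$ I would pick some $U_x\in\mathscr U$ containing $x$ and an open neighborhood $V_x$ of $x$ satisfying $\bar V_x\subseteq W\cap U_x$. Applying Lemma~\ref{extl} to $a\in\mathfrak A(W)$ with the pair $\bar V_x\subseteq W$ produces $a'_x\in\mathfrak A(X)$ with $a'_x|_{V_x}=a|_{V_x}$ and $\|a'_x\|\le\|a\|$; let $\tilde a_x=a'_x|_{U_x}\in\mathfrak A(U_x)$ and set $b_x=\alpha(U_x)(\tilde a_x)\in\mathfrak A'(U_x)$. Since $*$-homomorphisms of $C^*$-algebras are contractive, $\|b_x\|\le\|a\|$, so the restrictions $b_x|_{V_x}$ form a uniformly bounded family that covers $W$. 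To invoke the gluing axiom I need the compatibility $b_x|_{V_x\cap V_y}=b_y|_{V_x\cap V_y}$. For each $z\in V_x\cap V_y$ the germ hypothesis, combined with the fact that $\tilde a_x$ and $\tilde a_y$ both agree with $a$ on a neighborhood of $z$, shows that the germs of $b_x$ and $b_y$ at $z$ each coincide with $\alpha(z)(a(z))$; Proposition~\ref{locsec} then forces the desired equality on $V_x\cap V_y$. The gluing axiom now produces a unique $b\in\mathfrak A'(W)$ with $b|_{V_x}=b_x|_{V_x}$ for all $x$, and I define $\alpha(W)(a)=b$.

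Every further verification reduces to the same germ-matching strategy. Independence of the choices of $U_x$, $V_x$, and $a'_x$; naturality of $\alpha(W)$ under restriction $W'\subseteq W$; and the identification of the germ of $\alpha(W)(a)$ at $x$ with $\alpha(x)(a(x))$ are all established by checking that germs agree and appealing to Proposition~\ref{locsec}. For the $*$-algebra structure, for instance, the difference $\alpha(W)(a_1 a_2)-\alpha(W)(a_1)\alpha(W)(a_2)$ has germ $\alpha(x)(a_1(x)a_2(x))-\alpha(x)(a_1(x))\alpha(x)(a_2(x))=0$ at each $x\in W$ because $\alpha(x)$ is already a $*$-homomorphism, and so the section vanishes by Proposition~\ref{locsec}; additivity and the $*$-operation are handled identically. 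The same principle yields uniqueness: any other morphism with the prescribed germs differs from $\alpha$ section-wise by an element whose germs all vanish.

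The step I expect to require the most care is the cross-compatibility $b_x|_{V_x\cap V_y}=b_y|_{V_x\cap V_y}$, since this is the only place where the coherence condition ``$\alpha(z)$ is the germ of $\alpha(U)$ for every $U\in\mathscr U$ containing $z$'' is substantively used, and it must be applied to two different members $U_x,U_y\in\mathscr U$ simultaneously. Once that compatibility is in hand, Lemma~\ref{extl} and Proposition~\ref{locsec} together make the remainder of the argument essentially mechanical.
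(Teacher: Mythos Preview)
Your strategy is the same as the paper's: produce a compatible family of local images, use Proposition~\ref{locsec} to verify agreement on overlaps, and then glue. The paper's proof is terser: it works only with $a\in\mathfrak A(X)$, restricts directly to each $U\in\mathscr U$, applies $\alpha(U)$, checks compatibility on $U\cap V$ via Proposition~\ref{locsec}, and glues --- leaving the passage to arbitrary open $W$ (and hence to a full morphism of sheaves) implicit. Your version is more thorough in that it builds $\alpha(W)$ for every open $W$, and your detour through Lemma~\ref{extl} is precisely what is needed to manufacture an element of $\mathfrak A(U_x)$ on which $\alpha(U_x)$ can act when $W\neq X$.

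One point to flag: Lemma~\ref{extl} is stated only for \emph{special} sheaves, and the proposition as written does not assume $\mathfrak A$ is special. So your invocation of that lemma is, strictly speaking, unjustified at the level of generality claimed. In practice this is harmless --- the paper remarks that every sheaf it actually uses is special, and indeed the paper's own argument, once one tries to extend it from $W=X$ to general $W$, runs into the same need for an extension step. But you should either note the additional hypothesis or, if you want to match the stated generality, observe that for $W=X$ no extension is needed (restriction to $U\in\mathscr U$ is direct), and that the construction for smaller $W$ can then be obtained by the same recipe applied to the restricted sheaves over $W$ using the same stalk-wise definition and Remark~\ref{secremark}, provided one can locally realize the germs --- which again ultimately comes down to an extension property.
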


\begin{proof} Let $a\in {\mathfrak A}(x)$ and let $\mathscr U$ be an open cover of the kind described in the statement.   Consider the family of elements $b_U =\alpha(U) (a_{|U} \in {\mathfrak A}'(U)$, $U\in\mathscr U$. If $U,V\in\mathscr U$ and $x\in U\cap V$, the previous proposition shows that $b_{U|(U\cap V)} = b_{V|(U\cap V)}$.  Thus the $\{b_U\}$ form a compatible family, so by the gluing axiom they are restrictions of $b\in {\mathfrak A}'(X)$.  We define $\alpha(a)=b$.   \end{proof}

\begin{corollary}\label{extcor} Let $\mathscr B$ be a basis for the topology of $X$ and let $\mathfrak A$ and ${\mathfrak A}'$ be sheaves of $C^*$-algebras over $X$.  Any natural transformation between the restrictions of $\mathfrak A$ and ${\mathfrak A}'$ to the full subcategory on the members of $\mathscr B$ extends uniquely to a morphism of sheaves. \quad\qedsymbol\end{corollary}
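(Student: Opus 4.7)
The plan is to reduce Corollary~\ref{extcor} to the preceding proposition by constructing stalk maps from the given basis-level natural transformation. Throughout, let $\{\alpha_B : \mathfrak{A}(B) \to \mathfrak{A}'(B)\}_{B\in\mathscr B}$ denote the given natural transformation.

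First I would observe that for each $x\in X$, the subsystem $\{B\in\mathscr B : x\in B\}$ is cofinal in the directed system of all open neighborhoods of $x$, since every open neighborhood of $x$ contains a basis element containing $x$. Consequently the stalk of any sheaf at $x$ may be computed as a direct limit over basis neighborhoods alone, so there is a canonical identification
\[ \mathfrak{A}(x) \;=\; \Dlim \{ \mathfrak{A}(B) : B\in\mathscr B,\ B\ni x \}, \]
and similarly for $\mathfrak{A}'(x)$. The maps $\alpha_B$ are compatible with restriction between basis elements by naturality, and so they induce a $*$-homomorphism $\alpha(x) \colon \mathfrak{A}(x) \to \mathfrak{A}'(x)$ on each stalk.

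Next I would apply the preceding proposition using the open cover $\mathscr U := \mathscr B$ together with the family of $*$-homomorphisms $\alpha(B) := \alpha_B$ for $B\in\mathscr B$. The compatibility hypothesis of that proposition, that $\alpha(x)$ be the germ of $\alpha(B)$ at each $x\in B$, is built into the construction: by definition of $\alpha(x)$ as a colimit map, the diagram
\[ \xymatrix{\mathfrak{A}(B)\ar[r]^{\alpha_B}\ar[d] & \mathfrak{A}'(B)\ar[d]\\ \mathfrak{A}(x)\ar[r]^{\alpha(x)} & \mathfrak{A}'(x)} \]
commutes for every $x\in B$. The preceding proposition then produces a unique sheaf morphism $\alpha\colon \mathfrak{A}\to\mathfrak{A}'$ with the prescribed germs.

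Finally I would verify that $\alpha$ genuinely extends the original basis-level transformation: for $B\in\mathscr B$, the two maps $\alpha(B)$ and $\alpha_B$ from $\mathfrak{A}(B)$ to $\mathfrak{A}'(B)$ induce the same germ $\alpha(x)$ at every $x\in B$, and so by Proposition~\ref{locsec} applied to the difference of their values on any section, they coincide. Uniqueness of the extension follows by the same argument, since any sheaf morphism extending $\{\alpha_B\}$ is forced to have germs equal to the $\alpha(x)$ constructed above, and Proposition~\ref{locsec} then shows such a morphism is unique. There is no real obstacle here — the whole argument is formal once one observes the cofinality of $\mathscr B$ in the neighborhood system of each point, which is what licenses passing from basis data to stalk data.
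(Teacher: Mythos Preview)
Your proposal is correct and is exactly the argument the paper has in mind: the corollary is stated with only a \qedsymbol, indicating it is an immediate consequence of the preceding proposition via cofinality of the basis neighborhoods at each point. You have simply spelled out those details carefully, including the verification via Proposition~\ref{locsec} that the resulting sheaf morphism restricts to the given $\alpha_B$ on each $B\in\mathscr B$.
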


\begin{remark}\label{secremark}Since a section of a sheaf is an example of a morphism (a section of $\mathfrak A$ is a morphism of the constant sheaf to $\mathfrak A$), a special case of these results is the following local characterization of sections: if $\mathfrak A$ is a sheaf of $C^*$-algebras and $a(x)\in{\mathfrak A}(x)$ for all $x$, and if each $x\in X$ has a neighborhood $U$ for which there is $a\in {\mathfrak A}(U)$ restricting to $a(y)$ for all $y\in U$, then the $a(x)$ arise from a section $a$ of $\mathfrak A$. \end{remark}

The following localization property for sheaves of $C^*$-algebras is the relative version of proposition~\ref{locsec}.

\begin{proposition}\label{reliso} Let $\alpha\colon {\mathfrak A}\to {\mathfrak A}'$ be a morphism of sheaves of $C^*$-algebras over $X$, and suppose that $\mathfrak A$ is special.  If the germ $\alpha(x)\colon {\mathfrak A}(x)\to {\mathfrak A}'(x)$ is an isomorphism for each $x\in X$, then $\alpha$ is an isomorphism of sheaves (and, in particular, $\alpha(X)\colon {\mathfrak A}(X)\to {\mathfrak A}'(X)$ is an isomorphism). \quad\qedsymbol\end{proposition}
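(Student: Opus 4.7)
The plan is to prove that $\alpha(U)\colon {\mathfrak A}(U)\to {\mathfrak A}'(U)$ is a $*$-isomorphism for every open $U\subseteq X$, which is exactly what it means for $\alpha$ to be an isomorphism of sheaves.  The hypotheses localise (the restriction ${\mathfrak A}|_U$ is again special and the pointwise germs are unchanged), so it suffices to treat the case $U=X$.  Injectivity of $\alpha(X)$ is immediate from Proposition~\ref{locsec}: if $\alpha(X)(a)=0$ then $\alpha(x)(a(x))=0$ for every $x\in X$, hence $a(x)=0$, hence $a=0$.  Since $\alpha(X)$ and each germ $\alpha(x)$ are then injective $*$-homomorphisms of $C^*$-algebras, they are automatically isometric.

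The surjectivity rests on the following approximation lemma: for every $b\in {\mathfrak A}'(X)$ and every $\epsilon>0$ there exists $a\in {\mathfrak A}(X)$ with $\|a\|\le \|b\|+\epsilon$ and $\|\alpha(X)(a)-b\|\le \epsilon$.  To construct $a$, use surjectivity of $\alpha(x)$ to lift $b(x)$ to $c_x\in {\mathfrak A}(x)$ with $\|c_x\|=\|b(x)\|\le \|b\|$, represent $c_x$ by $a_x\in {\mathfrak A}(W_x)$ on a small neighbourhood $W_x\ni x$ with $\|a_x\|\le \|b\|+\epsilon$, and then shrink $W_x$ so that $\|\alpha(W_x)(a_x) - b_{|W_x}\|<\epsilon$ (possible because the germ at $x$ vanishes).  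Pass to a locally finite refinement $\{W_j\}$ of $\{W_x\}$ with subordinate partition of unity $\{\phi_j\}$ and set $a=\sum_j \phi_j a_j$; by Lemma~\ref{puglue}, which is where the specialness of $\mathfrak A$ enters, this $a$ lies in ${\mathfrak A}(X)$ with $\|a\|\le \|b\|+\epsilon$.  To estimate $\|\alpha(X)(a)-b\|$, work germ by germ: since $\alpha$ commutes with germ maps and $\alpha(y)$ is $\C$-linear,
\[ \alpha(y)(a(y)) - b(y) = \sum_{j:\phi_j(y)\neq 0} \phi_j(y)\bigl(\alpha(y)(a_j(y)) - b(y)\bigr), \]
and each of the finitely many terms has norm at most $\phi_j(y)\epsilon$, so $\|\alpha(y)(a(y))-b(y)\|\le \epsilon$; the identity $\|c\|=\sup_y\|c(y)\|$ from the remark following Proposition~\ref{locsec} (valid in any sheaf of $C^*$-algebras) then gives $\|\alpha(X)(a)-b\|\le \epsilon$.

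To upgrade approximate preimages to an exact preimage, iterate with tolerances $\epsilon_n = 2^{-n-1}$: set $b_0 = b$, apply the lemma to $b_n$ with tolerance $\epsilon_n$ to obtain $a_n$, and define $b_{n+1}=b_n - \alpha(X)(a_n)$, so $\|b_{n+1}\|\le \epsilon_n$ and $\|a_n\|\le \|b_n\|+\epsilon_n$ is summable.  The series $\sum a_n$ converges in ${\mathfrak A}(X)$ to some $a$, and telescoping together with continuity of $\alpha(X)$ give $\alpha(X)(a)=b$.  The main obstacle throughout is that the partition-of-unity gluing only lives in the special sheaf $\mathfrak A$, whereas the target ${\mathfrak A}'$ is not assumed to carry any $C_b$-module structure at all; the resolution is to perform all gluing inside $\mathfrak A$ and to control $\|\alpha(X)(a)-b\|$ stalkwise, where each scalar $\phi_j(y)$ passes through the $*$-homomorphism $\alpha(y)$ with no additional hypothesis.
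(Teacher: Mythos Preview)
Your argument is correct and follows the same route as the paper: injectivity via Proposition~\ref{locsec}, then local lifts glued by a partition of unity through Lemma~\ref{puglue} to produce approximate preimages of any $b\in{\mathfrak A}'(X)$.  The paper's only shortcut over yours is in the final step: instead of iterating with a geometric series to build an exact preimage, it observes that the image of a $*$-homomorphism of $C^*$-algebras is automatically closed, so the approximation lemma alone already gives surjectivity.
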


\begin{proof} It follows directly from Proposition~\ref{locsec} that if $\alpha(x)$ is injective for each $x$, then $\alpha(X)$ is injective.  Now suppose further that $\alpha(x)$ is an \emph{isomorphism} for each $x$, and let $b\in {\mathfrak A}'(X)$.  Then, given any $\epsilon>0$, there exists an open cover $\mathscr U$ of $X$ and, for each $U\in\mathscr U$ an element $a_U\in {\mathfrak A}(U)$ such that $\|\alpha(U)(a_U) - b_{|U}\| < \epsilon/2$. Moreover if $U,V\in\mathscr U$ we have
\[ \|\alpha(U\cap V) (a_{U|V} - a_{V|U}) \| < \epsilon, \]
and therefore (since an injective $*$-homomorphism is isometric), $\|a_{U|V} - a_{V|U}\| < \epsilon$.   Thus we are in a position to apply Lemma~\ref{puglue} and obtain $a\in{\mathscr A}(X)$ with $\|\alpha(a)-b\|<2\epsilon$.  Since the range of $\alpha$ is closed, this suffices to show that $\alpha$ is surjective.  \end{proof}

\subsection{Appendix}  The literature contains a number of (apparently) different notions of a ``continuous family'' of $C^*$-algebras parameterized by $X$.  We briefly review these ideas here.  References for this subsection are~\cite{ara_sheaves_2010}, \cite{hofmann_bundles_1977} and~\cite[Appendix C]{williams_crossed_2007}.

\begin{definition} Let $A$ be a $C^*$-algebra.  One says that $A$ is a \emph{$C_0(X)$-algebra} if there is given a $*$-homomorphism $C_0(X) \to {\mathscr Z}(M(A))$, the center of the multiplier algebra of $A$.  In particular, then, $A$ becomes a $C_0(X)$-module, and we will  require that this structure is \emph{essential} in the sense that  $C_0(X)A=A$.   \end{definition}

Let $A$ be a $C_0(X)$-algebra, and for any closed subset $K$ of $X$ let $I_K$ denote the ideal of functions in $C_0(X)$  that vanish on $K$.  Then $I_KA$ is a closed ideal in $A$.  In particular we can consider the quotient algebras
\[ A_x = A / I_{\{x\}}A , \quad x\in X \]
which  form a family of $C^*$-algebras parameterized by $X$.   In fact they form an \emph{upper semicontinuous $C^*$-bundle} $\mathscr A$ in the sense of \cite[Definition 5.1]{ara_sheaves_2010}.  Conversely, given such an usc $C^*$-bundle $E$ over $X$, one can form its algebra of continuous sections vanishing at infinity, and this is a $C_0(X)$-algebra.
These constructions   establish a one-to-one correspondence between
essential $C_0(X)$-algebras and upper semicontinuous $C^*$-bundles over $X$,
in the case of a locally compact metrizable base space $X$.

Given an upper semicontinuous $C^*$-bundle $\mathscr A$, one can consider the functor which assigns to each open subset $U$ of $X$ the space ${\mathscr A}_b(U)$ of bounded continuous sections of $\mathscr A$ over $U$.  It is easy to verify that this functor is a \emph{sheaf} of unital $C^*$-algebras.  Moreover, this is a \emph{special} sheaf: ${\mathscr A}_b(U)$ is a module over $C_b(U)$ for every open set $U$.  Conversely, given a special sheaf of unital $C^*$-algebras, it can be shown that its stalks in fact form an usc $C^*$-bundle.   Thus every special sheaf arises from this construction.

As mentioned above, we will not use the results of this subsection in the main part of the paper.  Finally, note that the equivalences stated above depend on some assumptions on the topology of $X$ --- our hypothesis that $X$ is a proper metric space is stronger than necessary, but Hausdorff and paracompact at least are needed.   In several classical applications of bundle theory for $C^*$-algebras (though not in ours) these hypotheses do not apply.

\section{The sheaf of noncommutative symbols}
Let $X$ be locally compact and metrizable and let a Hilbert space representation $\rho\colon C_0(X)\to\Bounded(H)$ be given. As remarked in the introduction, we will call the Hilbert space $H$, provided with this structure, an \emph{$X$-module}.  Recall that any such representation $\rho$ extends to a representation of the algebra of bounded Borel functions on $X$.  We do \emph{not} require that the representation be essential (in other words, the projection corresponding to the Borel function 1 may not be the identity).

Let ${\mathfrak C}(X)$ and ${\mathfrak D}(X)$ be the algebras of locally compact and pseudolocal operators, respectively, on the $X$-module $H$.  (We'll write ${\mathfrak C}(X;H)$ and so on if it's necessary to specify the module explicitly.)   Let ${\mathfrak Q}(X)$ denote their quotient, ${\mathfrak D}(X)/{\mathfrak C}(X)$.  We refer to~\cite{HR3} for the basic properties of these objects and in particular for the definition~\cite[5.4.3]{HR3} of $K$-homology as
\[ K_i(X) = K_{i+1}({\mathfrak Q}(X))\]
whenever the $X$-module $H$ is \emph{ample}.   We shall also need \emph{Kasparov's Lemma}~\cite[5.4.6 and 5.4.7]{HR3}: an operator $T$ is pseudolocal if and only if $\rho(f)T\rho(g)$ is compact whenever $f$ and $g$ have disjoint supports.

Note that ${\mathfrak D}(X)$ contains the image of the representation $\rho$; we therefore obtain an induced homomorphism $\sigma \colon C_0(X) \to {\mathfrak Q}(X)$.

\begin{remark} We will also consider the case where, in addition to its topology, the space $X$ carries a \emph{coarse structure} $\mathscr C$ in the sense of~\cite{JR27}.  A coarse structure defines the \emph{controlled} neighborhoods of the diagonal in $X\times X$, or equivalently the \emph{uniformly bounded} covers of $X$ --- a cover $\mathscr U$ is uniformly bounded if and only if $\bigcup \{ U\times U : U\in{\mathscr U}\}$ is controlled. We assume that the coarse structure and the topology are \emph{weakly compatible}, in the sense that $X$ must admit a uniformly bounded open cover.  (This is weaker than the condition that the coarse structure be \emph{proper}, see Definition~2.22 in~\cite{JR27}.  It does still imply, however, that all compact subsets of $X\times X$ are controlled.)

An operator $T$ on $H$ is \emph{controlled} if there is a controlled set $S\subseteq X\times X$ such that $\rho(f)T\rho(g)=0$ whenever $\Supp(f)\times\Supp(g)$ is disjoint from $S$.  In the presence of a coarse structure $\mathscr C$ we may define ${\mathfrak C}(X,{\mathscr C})$ to be the $C^*$-algebra generated by the locally compact and \emph{controlled} operators on $H$ and similarly ${\mathfrak D}(X,{\mathscr C})$ to be the $C^*$-algebra generated by the pseudolocal and controlled operators\footnote{These algebras are denoted by $C^*(X)$ and $D^*(X)$ in~\cite{HR2,HR3,HR4a} where more information about their properties may be found.} and ${\mathfrak Q}(X,{\mathscr C})$ to be their quotient.    Observe that if $\mathscr C$ is the \emph{indiscrete} coarse structure (all subsets of $X\times X$ are controlled) then these definitions correspond to our earlier ones. For this reason we may sometimes omit explicit mention of the coarse structure $\mathscr C$ from our notation. \end{remark}

Note that (for any coarse structure $\mathscr C$), ${\mathfrak D}(X,{\mathscr C})$ contains the image of the representation $\rho$; we therefore obtain an induced homomorphism $\sigma \colon C_0(X) \to {\mathfrak Q}(X,{\mathscr C})$.

\begin{lemma}\label{l1} Whether or not the representation $\rho$ is essential, the homomorphism $\sigma$ is.  In fact, ${\mathfrak Q}(X,{\mathscr C};H)$ is isomorphic to ${\mathfrak Q}(X,{\mathscr C};PH)$, where $P$ is the projection corresponding to the Borel function 1 on $X$. \end{lemma}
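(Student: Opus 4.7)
The plan is to prove the second (``in fact'') assertion first, and to deduce essentiality of $\sigma$ from it. The restriction $\rho|_{PH}$ is automatically essential (by construction $P$ acts as the identity on $PH$), so the corresponding $\sigma_{PH}\colon C_0(X) \to \mathfrak{Q}(X,\mathscr{C};PH)$ sends $1$ to the unit of $\mathfrak{Q}(X,\mathscr{C};PH)$; once we know that the isomorphism intertwines $\sigma$ and $\sigma_{PH}$, essentiality of $\sigma$ will follow for free.

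To construct the isomorphism I would introduce the compression $\pi(T) := PTP|_{PH}$ together with the extension-by-zero map $\iota(S) := S \oplus 0$, and show that they descend to mutually inverse $*$-homomorphisms on the quotients. The calculations all hinge on the identities
\[ P\rho(f) = \rho(f)P = \rho(f), \qquad (I-P)\rho(f) = \rho(f)(I-P) = 0, \]
coming from the Borel functional calculus. Using them, one checks first that $\pi$ and $\iota$ carry pseudolocal, controlled operators to pseudolocal, controlled operators (with essentially the same controlled set), and second that $\pi$ is multiplicative modulo $\mathfrak{C}(X,\mathscr{C};PH)$: the defect $\pi(T_1)\pi(T_2) - \pi(T_1 T_2) = -PT_1(I-P)T_2 P$ is locally compact because $(I-P)T_2\rho(f) = (I-P)[T_2,\rho(f)]$ is compact, and it is trivially controlled. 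Surjectivity of the induced $\bar\pi$ is then immediate from the identity $\pi \circ \iota = \mathrm{id}$.

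Injectivity is the substantive step. Given $T \in \mathfrak{D}(X,\mathscr{C};H)$ with $\pi(T) \in \mathfrak{C}(X,\mathscr{C};PH)$, I would decompose
\[ T = PTP + PT(I-P) + (I-P)TP + (I-P)T(I-P) \]
and show that the last three summands always lie in $\mathfrak{C}(X,\mathscr{C};H)$, regardless of $T$. The block $(I-P)T(I-P)$ is annihilated by $\rho(f)$ on either side and satisfies $\rho(f)(I-P)T(I-P)\rho(g) = 0$, so it is locally compact and trivially controlled. For $PT(I-P)$ one has $PT(I-P)\rho(f) = 0$ and $\rho(f)PT(I-P) = \rho(f)T(I-P) = [\rho(f),T](I-P)$, which is compact by pseudolocality of $T$; the same identities show $\rho(f)PT(I-P)\rho(g) = 0$, so this block is also trivially controlled. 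The block $(I-P)TP$ is handled symmetrically. Combined with the hypothesis $PTP \in \mathfrak{C}(X,\mathscr{C};PH)$ (inflated to $H$ via $\iota$), this yields $T \in \mathfrak{C}(X,\mathscr{C};H)$.

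The main obstacle I anticipate is the simultaneous bookkeeping in this last step, verifying that each off-block summand is separately locally compact \emph{and} controlled in the sense of $\mathscr{C}$. The key trick is that $(I-P)\rho(f) = 0$ collapses mixed products involving $I-P$ into commutators with $T$, so pseudolocality is precisely what is needed to control them; once these identities are in hand, the remaining manipulations are essentially algebraic.
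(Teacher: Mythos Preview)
Your argument is correct and is essentially the paper's own proof: both decompose operators into $2\times 2$ blocks relative to $H = PH \oplus (I-P)H$, observe that for pseudolocal $T$ the three blocks other than $PTP$ are automatically locally compact (and, since $(I-P)\rho(f)=0$, trivially controlled), and conclude that compression to $PH$ induces the desired isomorphism on $\mathfrak{Q}$. One small imprecision: the multiplicativity defect $-PT_1(I-P)T_2P$ is \emph{not} itself ``trivially controlled'' in the sense that $\rho(f)(\cdot)\rho(g)=0$, but it still lies in $\mathfrak{C}(X,\mathscr{C};PH)$ because $PT_1(I-P)\in\mathfrak{C}(X,\mathscr{C};H)$ (this factor \emph{is} trivially controlled) and $\mathfrak{C}$ is an ideal in $\mathfrak{D}$.
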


\begin{proof} Write $T\in\Bounded(H)$ as a matrix
\[ T = \left( \begin{array}{cc} T_{11} & T_{12} \\ T_{21} & T_{22} \end{array}\right) \]
with respect to the direct sum decomposition $H = PH \oplus (1-P)H$.  In order that $T$ be pesudolocal we must have $T_{11}$ arbitrary, $T_{12}$ and $T_{21}$ locally compact, and $T_{22}$ pseudolocal.   In order that $T$ be locally compact we must have $T_{11}$ arbitrary and the other three entries locally compact.  It follows that $T\mapsto T_{22}$ defines an isomorphism ${\mathfrak Q}(X, {\mathscr C};H) \to {\mathfrak Q}(X, {\mathscr C};PH)$. \end{proof}

When working with the algebras associated to coarse structures, the following lemma is important.\footnote{In~\cite{HR3} it is incorrectly claimed that this lemma is  ``immediate'' from the definitions.}

\begin{lemma}\label{badlemma} Let $X$ be a locally compact metrizable space equipped with a weakly compatible coarse structure $\mathscr C$.  Let $T\in {\mathfrak D}(X,{\mathscr C})$ and suppose in addition that $T$ is locally compact.  Then $T\in {\mathfrak C}(X,{\mathscr C})$. \end{lemma}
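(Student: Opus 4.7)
The plan is to combine a pseudolocal controlled approximation $S$ of $T$ with a partition-of-unity ``diagonalization''. The operator $T$ itself supplies local compactness but no controlledness; the approximation $S$ supplies controlledness but no local compactness; the diagonalized operator $S^{\flat}$ will differ from $S$ by something in ${\mathfrak C}(X,{\mathscr C})$, while $T^{\flat}$ will itself lie in ${\mathfrak C}(X,{\mathscr C})$, and the error $T-T^{\flat}-(S-S^{\flat})$ will be controlled by $\|T-S\|$.

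First I would reduce to the essential case $\rho(1)=I$ via Lemma~\ref{l1}. Using weak compatibility together with the local compactness and paracompactness of $X$, choose a uniformly bounded, locally finite open cover $\{U_j\}$ of $X$ consisting of relatively compact sets, with a subordinate continuous partition of unity $\{\phi_j\}$, and set $\psi_j := \phi_j^{1/2}$ so that $\sum_j \psi_j^2 = 1$. Define the isometry $V\colon H\to\bigoplus_j H$ by $Vv=(\rho(\psi_j)v)_j$; since $\rho$ is essential, $V^{*}V=I$. For any $A\in\Bounded(H)$, write $A^{\flat} := V^{*}A_{\oplus}V = \sum_j\rho(\psi_j)A\rho(\psi_j)$ (where $A_{\oplus}$ is the block-diagonal operator with each summand $A$); this is a bounded operator of norm at most $\|A\|$.

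Given $\epsilon>0$, I would choose $S\in{\mathfrak D}(X,{\mathscr C})$ pseudolocal and controlled (with controlled set $E$) satisfying $\|T-S\|<\epsilon/2$, and then verify three claims. \emph{(i)} $T^{\flat}\in{\mathfrak C}(X,{\mathscr C})$: each summand $\rho(\psi_j)T\rho(\psi_j)$ is compact (since $T$ is locally compact and $\psi_j\in C_c$) and has support in $U_j\times U_j$; by local finiteness, $T^{\flat}\rho(f)$ and $\rho(f)T^{\flat}$ are finite sums of compacts for $f\in C_c$, so $T^{\flat}$ is locally compact, and it is controlled with set $\bigcup_j U_j\times U_j$. \emph{(ii)} $S-S^{\flat}\in{\mathfrak C}(X,{\mathscr C})$: controlledness holds because $S^{\flat}$ has controlled set contained in $E\cap\bigcup_j U_j\times U_j\subseteq E$, and for local compactness the key computation is, for $f\in C_c$,
\[ S^{\flat}\rho(f) = \sum_j\rho(\psi_j)S\rho(\psi_j f) = \sum_j \bigl(\rho(\phi_j f)S + \rho(\psi_j)[S,\rho(\psi_j f)]\bigr) = \rho(f)S + K, \]
with $K$ a finite sum of compacts, using $\sum_j\phi_j f = f$ and pseudolocality of $S$; hence $(S-S^{\flat})\rho(f) = [S,\rho(f)] - K$ is compact, and the symmetric argument handles $\rho(f)(S-S^{\flat})$. \emph{(iii)} $\|(T-T^{\flat})-(S-S^{\flat})\|\le 2\|T-S\|<\epsilon$, since $(T-T^{\flat})-(S-S^{\flat}) = V^{*}\bigl(V(T-S)-(T-S)_{\oplus}V\bigr)$, which has norm at most $2\|T-S\|$.

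Combining, $T = T^{\flat}+(S-S^{\flat})+E_{1}$, with the first two summands in ${\mathfrak C}(X,{\mathscr C})$ and $\|E_{1}\|<\epsilon$; since $\epsilon$ was arbitrary and ${\mathfrak C}(X,{\mathscr C})$ is closed, $T\in{\mathfrak C}(X,{\mathscr C})$. I expect the main technical obstacle to be the local compactness in claim \emph{(ii)}: it requires simultaneous use of pseudolocality (to make the commutator term compact) and of the identity $\sum_j\phi_j f=f$, which produces the ``uncancelled'' term $\rho(f)S$ that then annihilates against $S\rho(f)$ upon subtraction, leaving only compacts.
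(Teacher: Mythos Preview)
Your proof is correct and follows essentially the same route as the paper: your diagonalization $A\mapsto A^{\flat}=\sum_j\rho(\psi_j)A\rho(\psi_j)$ is exactly the completely positive contraction $\Phi$ used there, and your decomposition $T=T^{\flat}+(S-S^{\flat})+E_1$ is the single-approximant version of the paper's $T=\Phi(T)+\lim_n\bigl(T_n-\Phi(T_n)\bigr)$. The reduction to the essential case is unnecessary (and Lemma~\ref{l1} concerns $\mathfrak Q$ rather than $\mathfrak C$ and $\mathfrak D$), but the rest of your argument goes through verbatim without it since $\|V\|\le 1$ even when $V^*V=P\neq I$.
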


In other words, we have $ {\mathfrak D}(X,{\mathscr C})\cap   {\mathfrak C}(X) = {\mathfrak C}(X,{\mathscr C})$.

\begin{proof} Let us define a \emph{controlled partition of unity} on $X$ to be a locally finite partition of unity $\{\phi_j\}$, consisting of continuous functions, such that the supports $\Supp(\phi_j)$ form a uniformly bounded cover.  Since $X$ is paracompact and admits a uniformly bounded open cover, controlled partitions of unity exist.  Fix one for the duration of the proof.

Let $T_n$ be a sequence of controlled, pseudolocal operators converging (in norm) to $T$.  We know that $T$ is locally compact, but we do \emph{not} know a priori that the $\{T_n\}$ are --- that is why the lemma is not ``immediate''.  Let $\{\phi_j\}$ be a controlled partition of unity on $X$ and let $\Phi\colon\Bounded(H)\to\Bounded(H)$ be the completely positive contraction
\[ \Phi(S) = \sum_j \rho(\phi_j^{1/2}) S \rho(\phi_j^{1/2}). \]
It is easy to check that $\Phi(S)$ is a controlled operator (whatever $S$ is), and that if $S$ is pseudolocal, then $\Phi(S)-S$ is locally compact.  In particular, $\Phi(T)= (\Phi(T)-T) + T$  is locally compact and controlled, so it belongs to ${\mathfrak C}(X,{\mathscr C})$.  Moreover, $T-\Phi(T) = \lim \left( T_n- \Phi(T_n) \right)$ is a limit of locally compact, controlled operators, hence it belongs to ${\mathfrak C}(X,{\mathscr C})$.  It follows that $T\in {\mathfrak C}(X,{\mathscr C})$ as asserted. \end{proof}

Now let $X$ be as above and let $U$ be an open subset of $X$.  A compatible coarse structure $\mathscr C$ on $X$ restricts to one on $U$. Let $P_U$ be the projection on $H$ associated (via the Borel extension of $\rho$) to the characteristic function of $U$.  Note that $P_U\in{\mathfrak D}(X,{\mathscr C})$.  Let $i_U\colon C_0(U)\to C_0(X)$ denote the extension-by-zero homomorphism.  Then for each $f\in C_0(U)$, $\rho(i_U(f))$ maps $P_UH$ to itself and vanishes on the orthogonal complement $(1-P_U)H$.  Consequently, $P_UH$ has the structure of a $U$-module.

\begin{definition} When we refer to ${\mathfrak C}(U)$, ${\mathfrak D}(U)$, and ${\mathfrak Q}(U)$, these will always be defined on $P_UH$, using the $U$-module structure explained above, and the restriction of the coarse structure (if any) on $X$. \end{definition}

Suppose  that $U$ and $V$ are open subsets of $X$, with $U\subseteq V$.  Since $P_U\in {\mathfrak D}(X)$, it is easy to see that the formula
\[ r_{U,V}(T) = P_UTP_U \]
defines  contractive linear \emph{restriction maps} ${\mathfrak C}(V)\to{\mathfrak C}(U)$, ${\mathfrak D}(V)\to{\mathfrak D}(U)$ and ${\mathfrak Q}(V)\to{\mathfrak Q}(U)$, and these have the functorial property
\[ r_{U,V} \circ r_{V,W} = r_{U,W} \]
for $U\subseteq V \subseteq W$.  The restriction maps are not  homomorphisms of \emph{algebras} for $\mathfrak C$ or for $\mathfrak D$.  However, it turns out that they \emph{are}   homomorphisms for the algebras $\mathfrak Q$.

\begin{lemma}\label{pslemma}   Let $X$ be  a locally compact metrizable space, possibly equipped with a coarse structure, and let $H$ be an $X$-module.  Then the assignment
$ U \mapsto {\mathfrak Q}(U) $
(for open subsets $U$ of $X$), together with the restriction maps $r_{U,V}$, defines a presheaf of unital $C^*$-algebras over the space $X$. \end{lemma}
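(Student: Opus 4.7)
The only substantial content in this lemma is multiplicativity of the restriction map on the quotient. The other presheaf axioms follow from straightforward algebra of the spectral projections $P_U$: linearity and $*$-preservation of $r_{U,V}$ are trivial, functoriality $r_{U,V}\circ r_{V,W}=r_{U,W}$ amounts to $P_U P_V P_U = P_U$ (valid because $\chi_U\chi_V=\chi_U$ for $U\subseteq V$ and this persists in the Borel functional calculus), and unitality $r_{U,V}(P_V)=P_U$ is the same identity. The parenthetical remark in the excerpt already grants that $r_{U,V}$ sends $\mathfrak D(V)\to\mathfrak D(U)$ and $\mathfrak C(V)\to\mathfrak C(U)$, so $r_{U,V}$ descends to a well-defined contractive linear map $\mathfrak Q(V)\to\mathfrak Q(U)$, and the task is to show this map is multiplicative.

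The plan is to verify that for $T_1,T_2\in\mathfrak D(V)$ the \emph{defect}
\[ r_{U,V}(T_1T_2)-r_{U,V}(T_1)\,r_{U,V}(T_2) = P_UT_1T_2P_U - P_UT_1P_UT_2P_U = -P_UT_1(P_V-P_U)T_2P_U \]
(using $P_UP_V=P_U$) lies in $\mathfrak C(U)$. First I would show it is locally compact for the $U$-module structure on $P_UH$. Given $f\in C_c(U)$, choose by Urysohn a continuous $g\in C_c(U)$ with $g\equiv 1$ on $\Supp(f)$. Then $\rho(g)(P_V-P_U)=0$, since the Borel function $g(\chi_V-\chi_U)$ vanishes identically ($g$ is supported in $U\subseteq V$). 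Writing $\rho(i_U(f))=\rho(i_U(f))\rho(i_U(g))$ and commuting $\rho(i_U(g))$ past $T_1$ using pseudolocality of $T_1$, we obtain that $\rho(i_U(f))$ times the defect equals $\rho(i_U(f))T_1\rho(i_U(g))(P_V-P_U)T_2P_U$ modulo a compact, and the main term is zero. A symmetric argument on the right handles multiplication by $\rho(i_U(f))$ from the right (using pseudolocality of $T_2^*$, applied to the adjoint).

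In the presence of a coarse structure $\mathscr C$, local compactness alone is not enough: one needs the defect to lie in $\mathfrak C(U,\mathscr C|_U)$. Here I invoke Lemma~\ref{badlemma}: it suffices to show that the defect already lies in $\mathfrak D(U,\mathscr C|_U)$. This reduces to the auxiliary statement that if $T\in\mathfrak D(X,\mathscr C)$ then $P_UTP_U\in\mathfrak D(U,\mathscr C|_U)$, which follows by approximating $T$ in norm by controlled pseudolocal operators $T_n$, noting that controlledness of $T_n$ with controlled set $S$ forces controlledness of $P_UT_nP_U$ with controlled set $S\cap(U\times U)$, and checking pseudolocality of $P_UT_nP_U$ on $P_UH$ by the same Urysohn trick used above.

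The main obstacle, and the reason the Urysohn insertion is needed throughout, is that $P_U$ is only a \emph{Borel} projection: commutators such as $[T,P_U]$ are not a priori compact, so one cannot directly move $P_U$ past pseudolocal operators. The Urysohn bump $g$ supported in $U$ with $g\equiv 1$ on the relevant compact set serves as a continuous substitute that shares the support properties of $\chi_U$ where they matter, and pseudolocality may then be applied legitimately.
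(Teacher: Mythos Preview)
Your proof is correct and follows the same approach as the paper: compute the multiplicativity defect $E = P_U T_1 (P_V - P_U) T_2 P_U$, then show $\rho(i_U(f))E$ is compact by commuting the continuous function past the pseudolocal operator so that it meets (and annihilates) the projection $P_V - P_U$, invoking Lemma~\ref{badlemma} in the coarse case. The Urysohn insertion of $g$ is unnecessary, however: since $i_U(f)\cdot(\chi_V-\chi_U)$ vanishes identically as a Borel function, one has $\rho(i_U(f))(P_V-P_U)=0$ directly for every $f\in C_0(U)$, and the paper simply commutes $\rho(i_U(f))$ itself past $T_1$.
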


\begin{proof}   We need only show that the restriction maps are algebra homomorphisms (it is clear that they respect the involution). Denote $r_{U,V}(T)$ by $T_{|U}$.  We must show (using Lemma~\ref{badlemma} in the case that a coarse structure is present) that, for $S,T\in {\mathfrak D}(V)$, the difference
\[ E:= S_{|U}T_{|U} - (ST)_{|U} = P_U S (1 - P_U)T P_U \]
belongs to ${\mathfrak C}(U)$.  But for $f\in C_0(U)$,
\[ \rho(i_U(f))E = \rho(i_U(f)) S(1-P_U)TP_U \sim S \rho(i_U(f)) (1-P_U)TP_U = 0, \]
where we have used the standard notation $\sim$ for equality modulo compacts.  Similarly $E\rho(i_U(f))\sim 0$, so $E\in {\mathfrak C}(U)$, as required. \end{proof}

Now we want to show that this presheaf is in fact a sheaf.

\begin{theorem}\label{qsheaf} The functor $\mathfrak Q$ defined in Lemma~\ref{pslemma} is in fact a sheaf of $C^*$-algebras in the sense of Definition~\ref{ssdef}. \end{theorem}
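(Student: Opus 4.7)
The goal is to verify the two sheaf axioms of Definition~\ref{ssdef} for an arbitrary open cover $U=\bigcup U_j$; the condition ${\mathfrak Q}(\emptyset)=0$ is immediate from $P_\emptyset=0$. Throughout I would fix a locally finite partition of unity $\{\phi_j\}$ subordinate to $\{U_j\}$, taken to be controlled (as in the proof of Lemma~\ref{badlemma}) when a coarse structure is present.

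For uniqueness, suppose $T\in{\mathfrak D}(U,{\mathscr C})$ satisfies $P_{U_j}TP_{U_j}\in{\mathfrak C}(U_j)$ for every $j$. By Lemma~\ref{badlemma} it suffices to prove that $T$ is locally compact, i.e.\ that $\rho(f)T$ is compact for each $f\in C_0(U)$. After approximating $f$ by compactly supported functions and using local finiteness, this reduces to showing each $\rho(\phi_j f)T$ is compact. Now $\phi_j f$ is a bounded Borel function supported in $U_j$, so $\rho(\phi_j f)=\rho(\phi_j f)P_{U_j}=P_{U_j}\rho(\phi_j f)$; pseudolocality of $T$ then lets one rewrite $\rho(\phi_j f)T$ modulo compacts as $\rho(\phi_j f)P_{U_j}TP_{U_j}$, which is compact because $P_{U_j}TP_{U_j}\in{\mathfrak C}(U_j)$.

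For gluing, I would lift each $T_j$ to $\tilde T_j\in{\mathfrak D}(U_j,{\mathscr C})$ with $\|\tilde T_j\|<\|T_j\|+\epsilon$ and define
$$
\tilde T = \sum_j \rho(\phi_j^{1/2})\tilde T_j\rho(\phi_j^{1/2}),
$$
realized as the sandwich $R^*DR$, where $D=\bigoplus_j \tilde T_j$ acts diagonally on $\bigoplus_j P_{U_j}H$ and $R\colon P_UH\to \bigoplus_j P_{U_j}H$ sends $\xi\mapsto \bigoplus_j\rho(\phi_j^{1/2})\xi$. Since $\sum_j\phi_j=1$ on $U$, the map $R$ is isometric on $P_UH$, giving $\|\tilde T\|\le\sup_j\|\tilde T_j\|$; the norm bound of the axiom then follows by letting $\epsilon\to 0$. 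Pseudolocality and, in the coarse case, the controlled property of $\tilde T$ follow from the corresponding properties of each $\tilde T_j$: commutators and Kasparov-type criteria distribute across the sum, and locally only finitely many summands contribute when tested against any $\rho(h)$ with $h\in C_0(U)$.

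The delicate step is verifying that $P_{U_j}\tilde TP_{U_j}\equiv\tilde T_j\bmod{\mathfrak C}(U_j)$ for each $j$. Moving each $\rho(\phi_k^{1/2})$ past $\tilde T_k$ via pseudolocality and using commutation of $P_{U_j}$ with Borel functions reduces the identity to showing
$$
\sum_k P_{U_j\cap U_k}\tilde T_k P_{U_j\cap U_k}\rho(\phi_k) \equiv \tilde T_j \mod {\mathfrak C}(U_j).
$$
The compatibility hypothesis $T_{k|U_j\cap U_k}=T_{j|U_j\cap U_k}$ permits replacement of $\tilde T_k$ by $\tilde T_j$ in each summand modulo ${\mathfrak C}(U_j\cap U_k)\subseteq{\mathfrak C}(U_j)$, after which $\sum_k\rho(\phi_k)$ acts as the identity on $P_UH$ and returns $\tilde T_j$. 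I expect this bookkeeping to be the principal obstacle: one must ensure uniform control of the pseudolocal rearrangements and of the aggregation of locally compact errors across the cover, and in the coarse case the controlled partition of unity together with Lemma~\ref{badlemma} will be indispensable for passing from local compactness of each error back into ${\mathfrak C}(U,{\mathscr C})$.
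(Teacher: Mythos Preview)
Your proposal is correct and follows essentially the same strategy as the paper: a controlled, locally finite partition of unity $\{\phi_j\}$, the same glued operator $\tilde T=\sum_j\rho(\phi_j^{1/2})\tilde T_j\rho(\phi_j^{1/2})$, and Lemma~\ref{badlemma} to pass from local compactness to ${\mathfrak C}(\cdot,{\mathscr C})$; your $R^*DR$ formulation is a clean way to get the norm bound that the paper only asserts.

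The one place your outline diverges is the restriction check, which you identify as the ``principal obstacle.''  The paper sidesteps your projection bookkeeping and the worry about aggregating infinitely many locally compact errors simply by testing against $g\in C_c(U_k)$: one writes
\[
\rho(g)(\tilde T-\tilde T_k)=\sum_j \rho(\phi_j^{1/2}g)(\tilde T_j-\tilde T_k)\rho(\phi_j^{1/2}),
\]
which is a \emph{finite} sum (since $g$ has compact support and $\{\phi_j\}$ is locally finite) of compact operators, by the compatibility hypothesis.  This shows local compactness of $\tilde T_{|U_k}-\tilde T_k$ directly, and then Lemma~\ref{badlemma} finishes.  So your anticipated obstacle is not genuine---it disappears once you test with compactly supported functions rather than the projections $P_{U_j\cap U_k}$.
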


\begin{proof} We verify the uniqueness axiom.  Let $U=\bigcup U_j$ be a union of open sets and let $T\in {\mathfrak D}(U)$ represent an element of ${\mathfrak Q}(U)$ whose restriction to ${\mathfrak Q}(U_j)$ is zero for each $j$.  This implies that
$ \rho(f)T \sim 0 $
whenever $f\in C_0(U_j)$ for some $j$.  Now choose a locally finite partition of unity $\phi_j$ subordinate to the cover ${\mathscr U} = \{U_i\}$ of $U$ and for $f\in C_c(U)$ write
\[ \rho(f)T = \sum_j \rho(f\phi_j)T \sim 0\]
(the sum is finite because $f$ is compactly supported). Since $C_c(U)$ is dense in $C_0(U)$, this suffices to prove that $T$ is locally compact over $U$, and thus (appealing to Lemma~\ref{badlemma} in case the coarse structure is non-trivial) that $T\in {\mathfrak C}(U)$.  It follows that $[T]=0\in {\mathfrak Q}(U)$.

We verify the gluing axiom.  Let $T_j\in {\mathfrak D}(U_j)$ be a bounded family such that, for each pair $i,j$, $T_{i|U_j} - T_{j|U_i}$ is $(U_i\cap U_j)$-locally compact.  Choose a locally finite and \emph{controlled} partition of unity $\{\phi_j\}$ subordinate to the cover $\mathscr U$. Put
\[ T = \sum_j \rho(\phi_j^{1/2}) T_j \rho(\phi_j^{1/2}), \]
the series converging in the strong operator topology with $\|T\|\le\sup \|T_j\|$.  We must show that $T\in {\mathfrak D}(U)$ and that $T_{|{U_k}} - T_k$ belongs to ${\mathfrak C}(U_k)$.

To prove the first, again it suffices to consider $f\in C_c(U)$ and note that
\[ \rho(f)T - T\rho(f) = \sum_j \left( \rho(f\phi_j^{1/2}) T_j \rho(\phi_j^{1/2}) - \rho(\phi_j^{1/2}) T_j \rho(f\phi_j^{1/2})\right) \]
is a finite sum of compact operators and hence compact.  Thus $T$ is pseudolocal, and by construction it is controlled since the partition of unity $\{\phi_j\}$ is.

To prove the second, let $g\in C_c(U_k)$.  Then
\[ \rho(g)(T-T_k) = \sum_j \rho(\phi_j^{1/2}g) (T_j - T_k) \rho(\phi_j^{1/2}). \]
Again the sum is finite, and each $\rho(\phi_j^{1/2}g)(T_j - T_k)$ is compact by the gluing condition.   Thus $T_{|{U_k}} - T_k$ is locally compact over $U_k$.  Another appeal to Lemma~\ref{badlemma} (applied to the space $U_k$) now completes the proof.
\end{proof}

\begin{remark} Finally note that $\mathfrak Q$ is a \emph{special} sheaf.  To provide the module structure, it suffices to show that $\sigma(C_b(U))$ is \emph{central} in ${\mathfrak Q }(U)$; that is to say, $\rho(C_b(U))$ commutes modulo locally compact operators with every $T\in{\mathfrak D}(U)$.  But this is easy: if $f\in C_b(U)$, $g\in C_0(U)$ then
\[ g [T,f] = gTf - gfT \sim Tgf - gfT \sim 0 \]
as required. \end{remark}

\begin{definition} We will call $\mathfrak Q$ the \emph{sheaf of noncommutative symbols} over $X$. \end{definition}

We emphasize that these constructions are valid for any choice of (weakly compatible) coarse structure on $X$.  It is an important observation (used, for example, in the construction of assembly maps) that the resultant sheaves $\mathfrak Q$ are in fact \emph{independent} of the coarse structure.  Sheaf theory permits a very concise formulation of the proof:

\begin{proposition}\label{forgetcoarse} Let $X$ be a locally compact metrizable space, and let $\mathscr C$ be a weakly compatible coarse structure on $X$. Let $H$ be an $X$-module (assumed in the notation below).  Then the map
\[ {\mathfrak Q}(X,{\mathscr C}) \to {\mathfrak Q}(X), \]
defined by forgetting the coarse structure, is an isomorphism of algebras.  In fact, it comes from an isomorphism of the underlying sheaves. \end{proposition}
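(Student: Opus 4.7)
The plan is to construct a morphism of sheaves $\alpha\colon {\mathfrak Q}(-,{\mathscr C})\to {\mathfrak Q}(-)$ by ``forgetting the coarse structure,'' and then to verify that it induces an isomorphism on every stalk. Proposition~\ref{reliso}, together with the remark after Theorem~\ref{qsheaf} that ${\mathfrak Q}(-,{\mathscr C})$ is a special sheaf, will then give the conclusion, and the statement for global sections follows automatically.

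For the morphism: on each open $U\subseteq X$ we have tautological inclusions ${\mathfrak D}(U,{\mathscr C})\subseteq {\mathfrak D}(U)$ and ${\mathfrak C}(U,{\mathscr C})\subseteq {\mathfrak C}(U)$, so there is a natural $*$-homomorphism $\alpha(U)\colon {\mathfrak Q}(U,{\mathscr C})\to {\mathfrak Q}(U)$. Since the restriction maps $r_{U,V}(T)=P_UTP_U$ are given by the same formula on both sides, the family $\{\alpha(U)\}$ is compatible with restrictions and so defines a morphism of presheaves, hence a morphism of sheaves.

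The key step is the stalk calculation. Weak compatibility of ${\mathscr C}$ means that $X$ admits a uniformly bounded open cover, so every $x\in X$ has an open neighborhood $V$ for which $V\times V$ is controlled. Moreover, by intersecting this $V$ with any prescribed neighborhood of $x$, we may take $V$ arbitrarily small; thus the collection of such open sets $U\subseteq V$ is cofinal in the directed system of neighborhoods of $x$. For any such $U$ and any bounded operator $T$ on $P_UH$, one has $\rho(f)T\rho(g)=0$ unless both supports lie in $U$, in which case $\Supp(f)\times\Supp(g)\subseteq U\times U\subseteq V\times V$ is controlled. So $T$ is automatically controlled, giving ${\mathfrak D}(U,{\mathscr C}) = {\mathfrak D}(U)$ and ${\mathfrak C}(U,{\mathscr C}) = {\mathfrak C}(U)$, and hence $\alpha(U)$ is the identity. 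Taking the direct limit over this cofinal system shows that $\alpha(x)$ is an isomorphism for every $x\in X$.

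With the stalk isomorphism in hand, Proposition~\ref{reliso} applies (the source sheaf being special) and yields that $\alpha$ is an isomorphism of sheaves; in particular $\alpha(X)\colon{\mathfrak Q}(X,{\mathscr C})\to{\mathfrak Q}(X)$ is an isomorphism of algebras. The only real subtlety is the cofinality argument used in the stalk calculation --- one has to exploit weak compatibility to produce bounded neighborhoods $V$ of $x$ that are arbitrarily small, rather than merely one such neighborhood of each point.
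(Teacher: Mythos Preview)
Your proposal is correct and follows essentially the same route as the paper: construct the forgetful morphism of sheaves, show it is an isomorphism on stalks by exhibiting a cofinal system of neighborhoods on which the induced coarse structure is indiscrete, and then invoke Proposition~\ref{reliso}. The only cosmetic difference is in how that cofinal system is produced: the paper uses local compactness to take neighborhoods $U$ with compact closure (and then the fact, noted earlier, that weak compatibility forces all compact subsets of $X\times X$ to be controlled), whereas you appeal directly to a uniformly bounded open cover coming from weak compatibility. Either way one arrives at arbitrarily small neighborhoods $U$ with $U\times U$ controlled, so ${\mathfrak Q}(U,{\mathscr C})={\mathfrak Q}(U)$ and the stalk map is an isomorphism.
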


\begin{proof} Since $X$ is locally compact, every $x\in X$ has a neighborhood $U$ with compact closure.  The induced coarse structure on $U$ is then the indiscrete structure, and so
$ {\mathfrak Q}(U,{\mathscr C}) \to {\mathfrak Q}(U) $
is trivially an isomorphism. The result now follows from Proposition~\ref{reliso}. \end{proof}

This result has several applications: we give two.

\subsection{Construction of assembly maps}  Consider a space $X$ with a compatible coarse structure $\mathscr C$, as above.  By Proposition~\ref{forgetcoarse}, there is a short exact sequence of $C^*$-algebras
\[ 0 \to {\mathfrak C}(X;{\mathscr C}) \to {\mathfrak D}(X;{\mathscr C}) \to {\mathfrak Q}(X) \to 0. \]
The boundary map in the $K$-theory long exact sequence associated to this short exact sequence of $C^*$-algebras is the \emph{coarse assembly map for $X$}
\[ A_{\mathscr C} \colon K_i(X) = K_{i+1}({\mathfrak Q}(X)) \to K_i({\mathfrak C}(X;{\mathscr C})). \]
This construction of the coarse assembly map was first outlined in~\cite{HR2}.

There is a similar construction of the classical (``Baum-Connes'') assembly map (in the torsion-free case). The data here are  a \emph{compact} metric space $X$ and a normal covering space $\pi\colon \tilde{X}\to X$ with covering group $\Gamma$ (usually one considers the universal cover, but that does not make any difference here).  Let $H$ be an $X$-module.   There is an induced $(\tilde{X},\Gamma)$-module $\tilde{H}$, that is an $\tilde{X}$-module with a compatible unitary action of $\Gamma$.  This is most briefly described as
\[ \tilde{H} = E \otimes_{C(X)} H \]
where $E$ is the Hilbert $C(X)$-module of continuous sections of the ``Mischenko bundle'', the flat bundle over $X$ associated to the natural representation of $\pi_1(X)$ on $\ell^2(\Gamma)$ by deck transformations. (If $H=L^2(X,\mu)$, then $\tilde{H} = L^2(\tilde{X},\tilde{\mu})$ where $\tilde{\mu}$ is the pull-back of the measure $\mu$. ) We consider the algebras ${\mathfrak C}_\Gamma(X)$ and ${\mathfrak D}_\Gamma(X)$ and their quotient ${\mathfrak Q}_\Gamma(X)$, compare~\cite{JR26}.  Here ${\mathfrak C}_\Gamma$ denotes the norm closure of the controlled\footnote{With respect to the canonical $\Gamma$-invariant coarse structure on $\tilde{X}$.}, locally compact, $\Gamma$-invariant operators on $\tilde{H}$, and ${\mathfrak D}_\Gamma(X)$ is the same thing with ``locally compact'' replaced by ``pseudolocal''.  Again, for any open subset $U$ of $X$ we may define ${\mathfrak C}_\Gamma(U)$, ${\mathfrak D}_\Gamma(U)$ and ${\mathfrak Q}_\Gamma(U)$ starting from the $U$-module $P_UH$, and we have

\begin{proposition} For any normal $\Gamma$-covering on $X$, the construction ${\mathfrak Q}_\Gamma$ defines a (special) sheaf of unital ${\mathfrak C}$-algebras over $X$. \end{proposition}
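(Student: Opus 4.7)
The plan is to transplant the proof of Theorem~\ref{qsheaf} together with the remark that follows it on speciality to the $\Gamma$-equivariant setting, taking care at every step to preserve $\Gamma$-invariance. For any open $U\subseteq X$ the preimage $\tilde U = \pi^{-1}(U)$ is $\Gamma$-invariant, so the projection $P_U$ on $\tilde H$ attached (via the Borel extension of the $\tilde X$-module structure) to its characteristic function is also $\Gamma$-invariant. Consequently the formula $r_{U,V}(T) = P_UTP_U$ carries $\Gamma$-invariant operators to $\Gamma$-invariant ones, and the argument of Lemma~\ref{pslemma}, with each invocation of Lemma~\ref{badlemma} now applied to the canonical $\Gamma$-invariant coarse structure on $\tilde X$, upgrades this to a $*$-homomorphism $r_{U,V}\colon \mathfrak{Q}_\Gamma(V)\to\mathfrak{Q}_\Gamma(U)$ with the expected functoriality. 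Thus $\mathfrak{Q}_\Gamma$ is a presheaf of unital $C^*$-algebras, and the calculation $g[T,f]\sim 0$ from the remark after Theorem~\ref{qsheaf} goes through unchanged to show that $\sigma(C_b(U))$ is central in $\mathfrak{Q}_\Gamma(U)$; so once the sheaf axioms are in place, speciality is automatic.

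Uniqueness is handled essentially as before. Given a cover $U=\bigcup U_j$ and $T\in\mathfrak{D}_\Gamma(U)$ with $T_{|U_j}\in\mathfrak{C}_\Gamma(U_j)$ for each $j$, choose a locally finite partition of unity $\{\psi_j\}$ on $U$ subordinate to $\{U_j\}$ and pull it back to the $\Gamma$-invariant functions $\tilde\psi_j = \psi_j\circ\pi$ on $\tilde U$. For any $f\in C_c(\tilde U)$ the compact image $\pi(\Supp(f))\subset U$ meets only finitely many $\Supp(\psi_j)$, so the sum $\rho(f)T=\sum_j \rho(f\tilde\psi_j)T$ is finite and each summand is compact since $f\tilde\psi_j$ is compactly supported in $\tilde U_j$ and $T_{|U_j}$ is locally compact. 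Hence $T$ is locally compact over $\tilde U$, and a final appeal to Lemma~\ref{badlemma} puts $T$ in $\mathfrak{C}_\Gamma(U)$.

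The main obstacle is the gluing axiom, because the naive equivariant candidate $T=\sum_j\rho(\tilde\psi_j^{1/2})T_j\rho(\tilde\psi_j^{1/2})$ is $\Gamma$-invariant but \emph{not} controlled for the $\Gamma$-invariant coarse structure: the sets $\Supp(\tilde\psi_j)$ are $\Gamma$-invariant, hence unbounded in $\tilde X$, so the cover by these supports is not uniformly bounded. The remedy is to decompose $\tilde\psi_j$ through the covering structure of $\pi$. After shrinking the cover $\{V_j\}$ supporting $\{\psi_j\}$ so that $\pi$ trivializes over each $V_j$, choose a single sheet $D_j\subset\pi^{-1}(V_j)$ and a function $\eta_j\in C_c(\tilde X)$ supported in $D_j$ with $\sum_{\gamma\in\Gamma}\gamma\eta_j = \tilde\psi_j$, and set
\[ T \;=\; \sum_j \sum_{\gamma\in\Gamma} \rho\bigl((\gamma\eta_j)^{1/2}\bigr)\, T_j\, \rho\bigl((\gamma\eta_j)^{1/2}\bigr). \]
For fixed $j$ the translates $\{\gamma D_j\}_{\gamma\in\Gamma}$ are pairwise disjoint, so the inner sum converges strongly with norm $\le\|T_j\|$; the refined cover $\{\gamma D_j\}_{j,\gamma}$ is now uniformly bounded for the $\Gamma$-invariant coarse structure (its diagonal doubling sits inside the $\Gamma$-orbit of the compact $D_j\times D_j$), so the construction exhibits $T$ as a strong limit of controlled pseudolocal operators, and after approximating each $T_j$ in norm by controlled operators we conclude $T\in\mathfrak{D}_\Gamma(U)$. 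The $\Gamma$-invariance of $T$ is the identity $\delta\rho((\gamma\eta_j)^{1/2})T_j\rho((\gamma\eta_j)^{1/2})\delta^{-1} = \rho((\delta\gamma\eta_j)^{1/2})T_j\rho((\delta\gamma\eta_j)^{1/2})$, which under the reindexing $\gamma\mapsto\delta^{-1}\gamma$ exhibits $\delta T\delta^{-1}=T$. Finally, for $g\in C_c(\tilde U_k)$ the identity
\[ \rho(g)(T-T_k) \;=\; \sum_{j,\gamma}\rho\bigl((\gamma\eta_j)^{1/2}g\bigr)(T_j-T_k)\rho\bigl((\gamma\eta_j)^{1/2}\bigr) \]
is a finite sum (only finitely many $(j,\gamma)$ have $\gamma D_j$ meeting $\Supp(g)$), and each summand is compact because $(\gamma\eta_j)^{1/2}g$ is supported in $\pi^{-1}(V_j\cap U_k)\subseteq \pi^{-1}(U_j\cap U_k)$ where the gluing hypothesis forces $T_j-T_k$ to be locally compact. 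Thus $T_{|U_k}-T_k$ is locally compact over $\tilde U_k$, and one last use of Lemma~\ref{badlemma} places it in $\mathfrak{C}_\Gamma(U_k)$.
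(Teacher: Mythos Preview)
Your proof is correct and follows the same approach as the paper, which disposes of the result in a single sentence: ``We use the same techniques as before, being careful to employ $\Gamma$-invariant partitions of unity.'' You have spelled out in detail what this one line must mean, and in particular you have identified and resolved the genuine subtlety the paper glosses over: the pullback $\tilde\psi_j=\psi_j\circ\pi$ of a partition of unity on $X$ consists of individually $\Gamma$-invariant functions, but their supports are unbounded in $\tilde X$, so this family is not a \emph{controlled} partition of unity in the sense required by the gluing argument of Theorem~\ref{qsheaf}. Your remedy---refining so that $\pi$ trivializes over each $V_j$, splitting $\tilde\psi_j$ into sheet-supported pieces $\gamma\eta_j$, and using the family $\{\gamma\eta_j\}_{j,\gamma}$, which is $\Gamma$-invariant \emph{as a collection} and uniformly bounded---is exactly the right interpretation of ``$\Gamma$-invariant partition of unity'' here, and your verification of equivariance via reindexing and of the restriction condition is clean. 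One small stylistic point: the phrase ``after approximating each $T_j$ in norm by controlled operators'' is unnecessary, since $\rho((\gamma\eta_j)^{1/2})T_j\rho((\gamma\eta_j)^{1/2})$ already has support in $\gamma D_j\times\gamma D_j$ regardless of whether $T_j$ itself is controlled, so the sum $T$ is genuinely controlled on the nose, just as in the non-equivariant proof.
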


\begin{proof} We use the same techniques as before, being careful to employ $\Gamma$-invariant partitions of unity.  \end{proof}

\begin{lemma} With the notation above, we have a (natural) isomorphism ${\mathfrak Q}_\Gamma(X) \to {\mathfrak Q}(X)$.
\end{lemma}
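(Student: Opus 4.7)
The plan is to apply Proposition~\ref{reliso} to a morphism of sheaves $\bar\Psi\colon {\mathfrak Q}_\Gamma \to {\mathfrak Q}$, constructed first on a basis of open sets and then extended via Corollary~\ref{extcor}. Both sheaves are special (${\mathfrak Q}_\Gamma$ by the preceding proposition, ${\mathfrak Q}$ by the remark after Theorem~\ref{qsheaf}), so once $\bar\Psi$ is shown to induce isomorphisms on all stalks, the required isomorphism on global sections follows.

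As basis I would take the collection $\mathscr B$ of precompact open sets $U \subseteq X$ that are \emph{evenly covered}: $\pi^{-1}(U) = \bigsqcup_{\gamma\in\Gamma} \gamma s(U)$ for a chosen local section $s=s_U\colon U \to \tilde X$. This yields a decomposition $P_{\pi^{-1}(U)}\tilde H = \bigoplus_\gamma \gamma P_{s(U)}\tilde H$, and the sheet $s(U)$ provides a unitary identification $P_{s(U)}\tilde H \cong P_U H$ of $U$-modules. On such $U$ I would define $\Psi_U\colon {\mathfrak D}_\Gamma(U) \to {\mathfrak D}(U)$ by $\Psi_U(\tilde T) = P_{s(U)}\tilde T P_{s(U)}$, transported through the above identification.

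The main verification is that $\Psi_U$ descends to a $*$-isomorphism $\bar\Psi_U\colon {\mathfrak Q}_\Gamma(U) \to {\mathfrak Q}(U)$. The obstruction to multiplicativity is
\[
\Psi_U(\tilde T_1 \tilde T_2) - \Psi_U(\tilde T_1)\Psi_U(\tilde T_2) \;=\; \sum_{\gamma \neq e} P_{s(U)} \tilde T_1 P_{\gamma s(U)} \tilde T_2 P_{s(U)}.
\]
The sum is finite because $\tilde T_1$ and $\tilde T_2$ are controlled and only finitely many $\gamma$-translates of $s(U)$ meet a given controlled set; each summand lies in ${\mathfrak C}(U)$ because $s(U)$ and $\gamma s(U)$ are disjoint, so that Kasparov's lemma applied to the pseudolocal operator $\tilde T_1$ forces $\rho(f) P_{s(U)} \tilde T_1 P_{\gamma s(U)}$ to be compact for $f \in C_c(U)$. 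Injectivity of $\bar\Psi_U$ follows by the same off-diagonal reasoning: if $\Psi_U(\tilde T) \in {\mathfrak C}(U)$, then $\Gamma$-invariance makes every diagonal block of $\tilde T$ locally compact, while the off-diagonal blocks form a finite sum of locally compact pieces. Surjectivity is witnessed by the $\Gamma$-invariant diagonal lift $T \mapsto \sum_\gamma \gamma (P_{s(U)} T P_{s(U)}) \gamma^{-1}$, which has propagation zero (hence is controlled), is $\Gamma$-invariant by construction, and is pseudolocal on $\tilde X$ by checking commutators against $C_c(\tilde X)$ (which reduces to finitely many pseudolocal blocks) and extending by density to $C_0(\tilde X)$.

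The maps $\bar\Psi_U$ are natural with respect to inclusions $U\subseteq V$ in $\mathscr B$ once compatible sections are chosen, so Corollary~\ref{extcor} extends them to a morphism $\bar\Psi\colon {\mathfrak Q}_\Gamma \to {\mathfrak Q}$ of sheaves. Since each $\bar\Psi_U$ is an isomorphism and $\mathscr B$ is a neighborhood basis at every point of $X$, the induced map on each stalk is an isomorphism. Proposition~\ref{reliso} then delivers the desired global isomorphism. The main obstacle is the off-diagonal estimate: one must combine the controlled condition (to make the $\gamma$-sum finite) with Kasparov's lemma for the pseudolocal class (to dispose of each individual off-diagonal term) --- this is precisely the place where the coarse structure on $\tilde X$, although ultimately irrelevant by Proposition~\ref{forgetcoarse}, does essential intermediate work.
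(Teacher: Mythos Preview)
Your proposal is correct and follows the same approach as the paper: matrix decomposition over evenly covered opens, the diagonal-entry map $T\mapsto T_{ee}$ as the local isomorphism, then extension to a sheaf isomorphism via Corollary~\ref{extcor} and Proposition~\ref{reliso}. Your version spells out the multiplicativity, injectivity, and surjectivity checks that the paper states tersely; one small simplification is that by $\Gamma$-invariance the diagonal block $T_{\gamma\gamma}$ is independent of $\gamma$, so $\bar\Psi_U$ does not actually depend on the choice of section and the ``compatible sections'' issue you flag does not arise.
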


\begin{proof} Suppose that $U\subseteq X$ is an open set sufficiently small that $\pi^{-1}(U) \cong U\times\Gamma$ (note that there is a basis $\mathscr B$ for the topology of $X$ consisting of such sets).  Then $P_U\tilde{H} \cong P_UH \otimes \ell^2(\Gamma)$, so that any bounded operator $T$ on $P_U\tilde{H}$ can be represented by a matrix $T_{\gamma\delta}$ of bounded operators on $P_U H$.  Suppose that $T$ is pseudolocal, $\Gamma$-invariant and of finite propagation.  Then $T_{\gamma\delta}$ depends only on $\gamma\delta^{-1}$ and is zero except for finitely many values of $\gamma\delta^{-1}$, it is locally compact unless $\gamma=\delta$, and $T_{\gamma\gamma}$ is pseudolocal (and independent of $\gamma$).  It follows that the assignment
\[ T \mapsto T_{\gamma\gamma} \]
is an isomorphism ${\mathfrak Q}_\Gamma(U) \to {\mathfrak Q}(U)$.  Since these isomorphisms are defined for all $U$ belonging to the basis $\mathscr B$ and are obviously compatible, Corollary~\ref{extcor} and Proposition~\ref{reliso} show that they come from an isomorphism of sheaves.
\end{proof}

It can be shown  (see~\cite{JR26} again) that the algebra ${\mathfrak C}_\Gamma(X)$ is Morita equivalent to the reduced $C^*$-algebra $C^*_r(\Gamma)$. Thus from the above lemma and the boundary map in $K$-theory we obtain a homomorphism
\[ A_\Gamma: K_i(X) = K_{i+1}({\mathfrak Q}(X)) \to K_i(C^*_r(\Gamma)) \]
which is the Baum-Connes assembly map in this case.

\subsection{Flasqueness}   Recall that a sheaf $\mathfrak A$ is said to be \emph{flasque} (sometimes translated as \emph{flabby}) if, whenever $U\subseteq V$ are open sets, the restriction map ${\mathfrak A}(V)\to{\mathfrak A}(U)$ is an epimorphism.

\begin{proposition} Let $X$ be a locally compact metrizable space and let $H$ be an $X$-module.  Then the sheaf ${\mathfrak Q}(\cdot; H)$, defined over $X$, is flasque. \end{proposition}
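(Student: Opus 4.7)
My plan is to extend $T_0$ locally via Lemma~\ref{extl} and reassemble using Lemma~\ref{puglue}. Given $[T_0]\in{\mathfrak Q}(U)$ represented by $T_0\in{\mathfrak D}(U)$, I first exploit the fact that $U$, being open in the locally compact metrizable space $V$, is itself locally compact and paracompact. Thus I may choose a locally finite open cover $\{W_i\}_{i\in I}$ of $U$ consisting of open subsets whose $V$-closures $\bar W_i$ satisfy $\bar W_i\subseteq U$, together with a subordinate continuous partition of unity $\{\phi_i\}$ with $\phi_i\in C_c(W_i)$ and $\sum_i\phi_i=1$ on $U$. For each $i$, Lemma~\ref{extl}---applied to the sheaf ${\mathfrak Q}$ on $V$ using the pair $(U,W_i)$ with $\bar W_i\subseteq U$---produces $T_i\in{\mathfrak Q}(V)$ satisfying $T_i|_{W_i}=T_0|_{W_i}$ and $\|T_i\|\le\|T_0\|$.

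I next form
\[
T:=\sum_i\phi_i\,T_i\in{\mathfrak Q}(V),
\]
interpreting the sum via the module structure of the special sheaf and the gluing mechanism of Lemma~\ref{puglue}. To verify that $T|_U=T_0$, I reason stalkwise on the cover $\{W_j\}$ of $U$: at any point $x\in W_j$, every nonzero summand $\phi_i(x)T_i(x)$ has $x\in\Supp(\phi_i)\subseteq W_i$, and there $T_i(x)=T_0(x)$ by the matching produced in the previous step. Hence
\[
T|_{W_j}(x)=\sum_i\phi_i(x)T_0(x)=T_0(x),
\]
using $\sum_i\phi_i=1$ on $W_j\subseteq U$. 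Since this identity holds on each $W_j$ of a cover of $U$, the sheaf uniqueness axiom gives $T|_U=T_0$ in ${\mathfrak Q}(U)$, as required.

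The hard step will be making rigorous sense of the sum $\sum_i\phi_i T_i$ as a section over $V$. The partition of unity $\{\phi_i\}$ is locally finite in $U$ but not in $V$: because the $W_i$ must cover points of $U$ arbitrarily close to $\partial U\cap V$ while each $\bar W_i\subseteq U$, the supports accumulate toward $\partial U\cap V$, and Lemma~\ref{puglue} does not apply out of the box. To overcome this I would enlarge $\{W_i\}$ by a family $\{Y_k\}$ of open subsets of $V$ covering $V\setminus U$, equipped with trivial sheaf data $a_{Y_k}:=0\in{\mathfrak Q}(Y_k)$, chosen so that the combined cover is locally finite in $V$. The compatibility hypothesis of Lemma~\ref{puglue} holds trivially where the augmented cells avoid $U$, while cells straddling $\partial U\cap V$ must be handled via the approximate form of Lemma~\ref{puglue} together with a limit argument that shrinks the straddling cells to thinner and thinner neighborhoods of $\partial U\cap V$, exhibiting the required $T$ as the norm limit of an appropriate Cauchy sequence in ${\mathfrak Q}(V)$.
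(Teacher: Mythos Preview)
Your approach has a genuine and fatal gap: the ``hard step'' you flag at the end cannot be completed, because your argument up to that point uses nothing about $\mathfrak Q$ beyond the fact that it is a special sheaf. If your limit argument could be carried out, it would prove that \emph{every} special sheaf of $C^*$-algebras is flasque. But this is false already for the sheaf $U\mapsto C_b(U)$: take $X=[0,1]$, $U=(0,1)$, and the section $\sin(1/x)\in C_b(U)$, which has no preimage in $C_b(X)$. Concretely, in your scheme the augmented cells $Y_k$ straddling $\partial U$ carry zero data, while on their overlaps with the $W_i$ the restrictions of the $T_i$ need not be small at all; so the $\epsilon$ in the approximate form of Lemma~\ref{puglue} is bounded away from $0$ and no Cauchy sequence emerges.

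The paper's proof uses a genuinely $\mathfrak Q$-specific mechanism that is invisible at the level of the sheaf axioms. After reducing to $X$ compact, it equips $U$ with the \emph{topological} (continuously controlled) coarse structure $\mathscr C$ coming from the compactification $\bar U\subseteq X$, and invokes Proposition~\ref{forgetcoarse} to write $\mathfrak Q(U)=\mathfrak D(U,\mathscr C)/\mathfrak C(U,\mathscr C)$. A $\mathscr C$-controlled pseudolocal representative $T$ can then simply be \emph{extended by zero} to an operator $T'$ on $H$; the controlled-support condition together with Kasparov's Lemma forces $T'$ to be pseudolocal on $X$. This ``extend by zero'' step is exactly what fails for a general special sheaf (and for $C_b$ in particular), and it is where the argument actually uses the operator-theoretic definition of $\mathfrak Q$ rather than just its sheaf structure.
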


Notice that, in accordance with Proposition~\ref{forgetcoarse}, we make no mention of any coarse structure on $X$.

\begin{proof} (Compare the ``commutative proof'' of Theorem 5.4.5 in~\cite{HR3}.)  It is enough to show that, for any open subset $U$ of $X$, the restriction ${\mathfrak Q}(X) \to {\mathfrak Q}(U)$ is surjective.  Moreover, in doing this, there is no loss of generality in assuming that $X$ is compact.  In fact, if $Y$ denotes the 1-point compactification of $X$, then ${\mathfrak D}(Y;H)$ is a \emph{subalgebra} of ${\mathfrak D}(X;H)$ and so it certainly suffices to prove that ${\mathfrak Q}(Y)$ surjects onto ${\mathfrak Q}(U)$.

Let us assume $X$ is compact, then, and give $U$ the \emph{topological coarse structure} $\mathscr C$ \cite[Definition 2.28]{JR27} associated to its compactification $\bar{U}\subseteq X$.  According to Proposition~\ref{forgetcoarse}, we may write
\[ {\mathfrak Q}(U) = {\mathfrak D}(U,{\mathscr C})/{\mathfrak C}(U,{\mathscr C}). \]
But now let $P$ be the projection operator corresponding to $U$, and suppose that $T$ is a $\mathscr C$-controlled and $U$-pseudolocal operator on $PH$. Suppose that $T$ is supported in a controlled subset $S$ of $U\times U$.  Let $f,g\in C(X)$ have disjoint supports and let $T'$ be the extension of $T$ by zero to an operator on $H$.  By definition of the continuously controlled coarse structure,
\[ S \cap (\Supp(f)\times\Supp(g)) \]
is a relatively compact subset of $U\times U$. It follows from the pseudolocality of $T$ that $\rho(f)T'\rho(g)$ is a compact operator.  By Kasparov's Lemma, $T'$ is pseudolocal on $X$, i.e., it belongs to ${\mathfrak D}(X)$, and it clearly maps to $T$ under restriction. This completes the proof.
\end{proof}

\begin{remark} \label{ideff} Let $Z\subseteq X$ be closed, and consider the restriction map
\[ {\mathfrak D}(X) \to {\mathfrak Q}(X\setminus Z) \]
which we have just shown to be surjective.  The kernel of this map is the ideal consisting of those pseudolocal operators $T$ on $X$ which are locally compact away from $Z$ --- that is, if $f$ vanishes on $Z$, then $\rho(f)T$ and $T\rho(f)$ are compact.   This  ideal is denoted ${\mathfrak D}_X(Z)$.
Thus (by definition) we have an isomorphism
\[ {\mathfrak D}(X)/{\mathfrak D}_X(Z) \cong {\mathfrak Q}(X\setminus Z). \]
\end{remark}

\begin{remark}\label{excis} The excision theorem in $K$-homology says that ``from the point of view of $K$-theory'' the ideal ${\mathfrak D}_X(Z)$ behaves just like ${\mathfrak D}(Z)$.  In particular, the $K$-theory of ${\mathfrak D}_X(Z)/{\mathfrak C}(X)$ is the same as that of ${\mathfrak Q}(Z)$; that is, the $K$-homology of $Z$ (with a dimension shift).  See~\cite[Chapter 5]{HR3}.  \end{remark}

In the context of the preceding remark, suppose that $X$ also has a coarse structure $\mathscr C$.  There is a version (\cite{Siegel}) of  ${\mathfrak D}_X(Z)$ that takes the coarse structure into account: namely, the algebra ${\mathfrak D}_X(Z,{\mathscr C})$ generated by the controlled, pseudolocal operators $T$ that are locally compact away from $Z$ and are also \emph{supported close to $T$}.  This latter condition means that there is a controlled set $S$ such that $\rho(f)T = T\rho(f)=0$ if $\Supp(f)\times Z $ does not meet $S$.  Similarly we may define ${\mathfrak C}_X(Z,{\mathscr C})$ to be the algebra generated by the controlled, locally compact operators that are supported close to $Z$; it is an ideal in ${\mathfrak D}_X(Z,{\mathscr C})$.  We have the following ``relative'' version of Proposition~\ref{forgetcoarse}:

\begin{proposition} In the situation above, the quotient
\[ {\mathfrak Q}_X(Z,{\mathscr C}) = {\mathfrak D}_X(Z,{\mathscr C})/{\mathfrak C}_X(Z,{\mathscr C})
 \]
 is independent of the choice of compatible coarse structure $\mathscr C$ on $X$. \end{proposition}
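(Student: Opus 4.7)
The plan is to mirror Proposition~\ref{forgetcoarse}: I would promote $\mathfrak{Q}_X(Z,\mathscr{C})$ to the global sections of a special sheaf of $C^*$-algebras on $X$, and then apply Proposition~\ref{reliso} to a comparison morphism with the indiscrete-structure version. Because the indiscrete coarse structure makes the ``supported close to $Z$'' condition vacuous (every subset of $V \times V$ is controlled), the resulting target sheaf recovers the algebras of Remark~\ref{ideff}, and precompact neighborhoods give the required stalkwise agreement.

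First I would assemble the presheaf $V \mapsto \mathfrak{Q}_V(V\cap Z,\mathscr{C}|_V)$ over open $V \subseteq X$, with restriction maps $T \mapsto P_U T P_U$ modelled on Lemma~\ref{pslemma}. The sheaf axioms are verified by the same strategy as in Theorem~\ref{qsheaf}: uniqueness uses a partition-of-unity decomposition of $f \in C_c(V)$; gluing uses a controlled, locally finite partition of unity $\{\phi_j\}$ to form $T = \sum \rho(\phi_j^{1/2}) T_j \rho(\phi_j^{1/2})$, and one verifies that this $T$ is pseudolocal, controlled, locally compact off $V \cap Z$, and supported close to $V \cap Z$. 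Specialness follows from the same commutator calculation used for $\mathfrak Q$ itself.

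Next, the natural inclusions $\mathfrak{D}_V(V\cap Z,\mathscr{C}|_V) \hookrightarrow \mathfrak{D}_V(V\cap Z)$ and $\mathfrak{C}_V(V\cap Z,\mathscr{C}|_V) \hookrightarrow \mathfrak{C}_V(V\cap Z)$ induce a morphism of sheaves $\mathfrak{Q}_X(Z,\mathscr{C}) \to \mathfrak{Q}_X(Z)$. For each $x \in X$, choose a neighborhood $V_x$ of $x$ with compact closure; weak compatibility implies that every subset of $V_x \times V_x$ is $\mathscr{C}$-controlled, so $\mathscr{C}|_{V_x}$ is the indiscrete coarse structure on $V_x$ and the comparison map is \emph{tautologically} an isomorphism over $V_x$. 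Since such $V_x$ form a basis of opens, Corollary~\ref{extcor} together with Proposition~\ref{reliso} produces a global sheaf isomorphism, and in particular an isomorphism of global sections.

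The main obstacle is the sheafification step, especially the behavior of the ``supported close to $Z$'' condition under restriction. A witnessing controlled set $S \subseteq W \times W$ for an operator $T \in \mathfrak{D}_W(W\cap Z,\mathscr{C}|_W)$ does not restrict naively to a witness for $P_V T P_V$ relative to $V \cap Z$, because pairs in $S$ with second coordinate in $(W \setminus V)\cap Z$ give no direct constraint on the compressed operator. One must use the controlled propagation of $T$, together with Kasparov's Lemma and Lemma~\ref{badlemma}, to absorb the offending contributions into the ideal $\mathfrak{C}_V(V \cap Z,\mathscr{C}|_V)$ by which one is quotienting. Once this bookkeeping is handled, the verification of uniqueness and gluing, the identification with the indiscrete case, and the final application of Proposition~\ref{reliso} are all formal.
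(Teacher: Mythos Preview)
Your overall strategy coincides with the paper's: sheafify $U\mapsto\mathfrak{Q}_U(U\cap Z,\mathscr{C}|_U)$, compare via the forgetful morphism to the indiscrete-structure sheaf, observe that on precompact open sets the two coincide, and invoke Proposition~\ref{reliso}. That is exactly how the paper proceeds.

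However, the paper isolates one concrete technical modification that you do not mention, and it is precisely the point where the relative argument diverges from Theorem~\ref{qsheaf} and Lemma~\ref{badlemma}: whenever a controlled partition of unity $\{\phi_j\}$ is used to form a sum such as $\sum_j\rho(\phi_j^{1/2})T\rho(\phi_j^{1/2})$ or $\sum_j\rho(\phi_j^{1/2})T_j\rho(\phi_j^{1/2})$, one must restrict the sum to the index set
\[
J'=\{\,j:\Supp(\phi_j)\cap Z\neq\emptyset\,\}.
\]
Without this restriction the resulting operator need not be ``supported close to $Z$'': each individual summand may be witnessed by some controlled set, but there is no single controlled neighborhood of $Z$ that works for the whole (infinite) sum. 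Restricting to $J'$ solves this uniformly, because the partition is controlled and every surviving $\phi_j$ has support meeting $Z$. This is needed both for the relative version of Lemma~\ref{badlemma} (where the operator $\Phi(T)$ must land in $\mathfrak{C}_X(Z,\mathscr{C})$, hence must be supported close to $Z$) and for the gluing step.

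By contrast, the obstacle you flag---that the ``supported close to $Z$'' witness for $T$ on $W$ does not restrict naively to a witness on $V\cap Z$---is not where the paper locates the difficulty; compression by $P_V$ and passage to the quotient handle that essentially as in Lemma~\ref{pslemma}. So your plan is sound, but the bookkeeping you should be doing is the $J'$-truncation of the partition-of-unity sums, not the restriction-map issue.
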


\begin{proof} (Outline) We proceed as for the absolute version of the same result, Proposition~\ref{forgetcoarse}.  The first step is to show that ${\mathfrak Q}_\bullet(Z,{\mathscr C})$ is a sheaf over $X$, for any coarse structure $\mathscr C$. (Of course, the stalks over all $x\in X\setminus Z$ are in fact zero.)  Next, we argue exactly as in the absolute proof that the forgetful map
\[ {\mathfrak Q}_\bullet(Z,{\mathscr C}) \to  {\mathfrak Q}_\bullet(Z) \] is an isomorphism on stalks and therefore a global isomorphism.

The proof that ${\mathfrak Q}_\bullet(Z,{\mathscr C})$ is a sheaf proceeds as in Theorem~\ref{qsheaf}.  One  modifications is needed in the relative case.  In the proof of Theorem~\ref{qsheaf}, and again in the proof of lemma~\ref{badlemma} on which it depends, use is made of a \emph{controlled partition of unity} $\{\phi_j\}$ and of certain sums like $\sum \rho(\phi_j^{1/2}) T_j \rho(\phi_j^{1/2})$ depending on it.  The modification that is needed is to take these sums \emph{only} over the index set
\[ J' = \{ j : \Supp(\phi_j)\cap Z \neq \emptyset \} .\]
The reader may verify that, with this modification, the proofs of the relative version of Lemma~\ref{badlemma} (which states that ${\mathfrak D}_X(Z,{\mathscr C})\cap {\mathfrak C}(X) = {\mathfrak C}_X(Z,{\mathscr C})$) and of the relative version of Theorem~\ref{qsheaf} (which states that ${\mathfrak Q}_\bullet(Z,{\mathscr C})$ is a sheaf over $X$) both go through in the same way as the proofs of their absolute counterparts.
\end{proof}

\begin{remark} We will discuss the homological implications of this flasqueness in a subsequent paper. \end{remark}

\section{The homology class of an elliptic operator}

In Chapter X of~\cite{HR3}, a procedure is described for associating a K-homology class to any elliptic operator on a manifold $M$ (whether or not the manifold is complete for the operator).  This process involves a number of ``partition of unity'' constructions which are conveniently formulated in the language of the sheaf of symbols ${\mathfrak Q}$.

Let $M$ be a manifold and let $D$ be a (symmetric, first order) elliptic differential operator on (the sections of some vector bundle $S$ over)  $M$.  We are going to associate a $K$-homology class to $D$.  For this purpose, recall that a \emph{normalizing function} is an odd, smooth function $\chi\colon\R\to [-1,1]$ such that $\chi(\lambda)\to \pm 1$ as $\lambda\to\pm\infty$ (see~\cite[Definition 10.6.1]{HR3}).

 Let $H$ denote the $X$-module $L^2(S)$. Fix a normalizing function $\chi$.  For each $x\in M$, choose an open set $U$ containing $x$ and an essentially selfadjoint differential operator $D'$ that agrees with $D$ on $U$.  By~\cite[Lemma 10.6.4]{HR3}, $\chi(D')$ commutes modulo compact operators with every function $g\in C_0(U)$.  In other words, it defines an element of ${\mathfrak D}(U)$.

\begin{definition} The \emph{noncommutative symbol} of $D$ at $x$, denoted $\sigma_D(x)$, is the equivalence class defined by $\chi(D')$ in the stalk
\[ {\mathfrak Q}(x) = \lim_{V\ni x} {\mathfrak Q}(V) = \lim_{V\ni x} {\mathfrak D}(V)/{\mathfrak C}(V). \]
\end{definition}

In order that $\sigma_D(x)$ be well defined, we need the following proposition:

\begin{lemma} The element $\sigma_D(x)$ (of the stalk of $\mathfrak Q$ at $X$) defined above is a symmetry (a selfadjoint involution), independent of all the choices involved in its construction, namely those of the neighborhood $U$, the essentially self-adjoint extension $D'$, and the normalizing function $\chi$. Moreover, as $x$ varies, the $\sigma_D(x)$ form a   section of the sheaf $\mathfrak Q$. \end{lemma}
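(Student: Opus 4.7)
The plan is to verify the four assertions in order: (a) $\chi(D')$ defines a symmetry in ${\mathfrak Q}(U)$, (b) the stalk class is independent of the normalizing function $\chi$, (c) it is independent of the extension $D'$, and (d) the pointwise values $\sigma_D(x)$ come from a global section. For (a), selfadjointness of $\chi(D')$ is immediate because $\chi$ is real-valued and $D'$ is selfadjoint. For the involution relation $\chi(D')^2 \equiv 1$ modulo ${\mathfrak C}(U)$, I would use that $\chi^2 - 1 \in C_0(\R)$, so $\chi(D')^2 - 1 = \psi(D')$ for some $\psi \in C_0(\R)$. The standard compactness property for functions of elliptic operators (the same ingredient used to place $\chi(D')$ in ${\mathfrak D}(U)$ via~\cite[Lemma 10.6.4]{HR3}, refined slightly) gives that $\psi(D')\rho(g)$ and $\rho(g)\psi(D')$ are compact for every $g \in C_c(U)$, so $\psi(D') \in {\mathfrak C}(U)$. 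Assertion (b) follows by exactly the same argument, since any two normalizing functions differ by an element of $C_0(\R)$, and independence of the neighborhood $U$ is built into the direct-limit definition of the stalk.

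The main obstacle is (c). Suppose $D_1', D_2'$ are two essentially selfadjoint extensions of $D|_U$. I would show that $\chi(D_1') - \chi(D_2') \in {\mathfrak C}(V)$ for some possibly smaller neighborhood $V \ni x$. Following the wave-operator technique of~\cite[Chapter 10]{HR3}, I would first split $\chi$ as the Fourier transform of a Schwartz function plus a $C_0(\R)$ remainder, the remainder being handled by the argument of the preceding paragraph. This reduces matters to showing that
\[ \bigl( e^{itD_1'} - e^{itD_2'} \bigr)\rho(g) \]
is compact (indeed vanishes) for $g \in C_c(V)$ and $t$ in a small interval. By finite propagation speed, the solutions of the respective wave equations remain in $U$ for $|t|$ less than the distance from $\Supp(g)$ to $\partial U$, and since $D_1' = D_2'$ on $U$ the two evolutions coincide on $\rho(g)H$ for such $t$. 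Shrinking $V$ and truncating the Fourier integral converts this into the desired compactness.

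Granted (a)--(c), the section property (d) is immediate from Remark~\ref{secremark}: for each $x \in M$, any admissible choice of $U$, $D'$, $\chi$ produces an element $[\chi(D')] \in {\mathfrak Q}(U)$ whose germ at every $y \in U$ is $\sigma_D(y)$, because the same $D'$ and $\chi$ remain admissible data at every such $y$. The hypotheses of Remark~\ref{secremark} are therefore satisfied on an open cover of $M$, and the family $\{\sigma_D(x)\}_{x \in M}$ assembles into a global section of $\mathfrak Q$.
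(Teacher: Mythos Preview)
Your argument is correct and follows the same route as the paper: parts (a), (b), and (d) match the paper's proof verbatim (the paper cites Proposition~10.4.1 of~\cite{HR3} for the fact that $\phi(D')\in{\mathfrak C}(U)$ when $\phi\in C_0(\R)$, and Remark~\ref{secremark} for the section property), while for (c) you are unpacking exactly the finite-propagation argument that the paper invokes as Lemma~10.8.4 of~\cite{HR3}. One wording fix in (c): a normalizing function cannot literally be written as ``Fourier transform of a Schwartz function plus a $C_0(\R)$ remainder'' (both summands would then lie in $C_0(\R)$); the decomposition you want is $\chi=\chi_0+\phi$ with $\chi_0$ a normalizing function whose \emph{distributional} Fourier transform is compactly supported and $\phi\in C_0(\R)$, after which your finite-propagation step goes through as stated.
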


\begin{proof} Two normalizing functions differ by some $\phi\in C_0(\R)$, and Proposition 10.4.1 of~\cite{HR3} shows that the corresponding operators differ by $\phi(D') \in {\mathfrak C}(U)$.   Moreover, since $\chi$ is real-valued  and $\chi^2-1\in C_0(\R)$ for any normalizing function $\chi$, the equivalence class of $\chi(D')$ must indeed be a self-adjoint involution.

On the other hand, Lemma 10.8.4 of~\cite{HR3} shows that if two essentially selfadjoint operators $D'$ and $D''$ agree on some neighborhood $V$ of $x$ (possibly smaller than $U$), then there is \emph{some} normalizing function $\chi$ such that $\chi(D')\rho(g)=\chi(D'')\rho(g)$ for all $g\in C_0(V)$.

The final statement in the proposition (that we indeed have defined a section of the sheaf $\mathfrak Q$) follows from Remark~\ref{secremark}.
\end{proof}

We have not mentioned yet the possibility that the underlying vector bundle $S$, and therefore the Hilbert space $H$, is \emph{graded} (or multigraded). Such a grading carries through the whole discussion, and it indexes the kind of $K$-theory class determined by the operator in question.  An ungraded symmetry in a $C^*$-algebra determines a $K_0$ class; an odd, grdaed symmetry determines a $K_1$ class; and, if we are using multigradings, a $p$-multigraded symmetry determines a $K_p$ class.  See~\cite{HR3,vD1}.

\begin{definition} Let $D$ be an elliptic operator as above.  The $K$-theory class  of the (possibly graded) symmetry $\sigma_D$ in $K_{i+1}({\mathfrak Q}(X))$ is by definition the \emph{$K$-homology class} $[D]$ of the operator $D$, in the group $K_i(X)$. \end{definition}

Note that this agrees with Definition~10.8.3 of~\cite{HR3}.  We have formulated our present discussion in terms of symmetric, first order differential operators in order to connect directly with the exposition in~\cite{HR3}.  However, note that we could equally well express matters in terms of pseudodifferential operators (of any order).   Indeed, the symbol calculus for pseudodifferential operators~\cite{Tay} directly gives a homomorphism from the algebra of principal symbols at $x$ to the stalk ${\mathfrak Q}(x)$.

Analytic $K$-homology is a ``locally finite'' homology theory in the language of topology, and in particular there exist natural restriction maps $j_U\colon K_*(X)\to K_*(U)$, for any open subset $U$ of $X$.  These are just the result of applying $K$-theory to the restriction maps of the sheaf $\mathfrak Q$.  Indeed, the restriction of elliptic operators to open subsets becomes particularly straightforward from the sheaf-theoretic point of view.

\begin{proposition}\label{restrict} Let $M$ be a smooth manifold and let $D_M$ be a (symmetric first order) elliptic operator on $M$.  let $U$ be an open subset of $M$ and let $D_U$ be the restriction of $D$ to $U$.  Then $j_U[D_M]=[D_U]$, where $j_U$ is the restriction map defined above. \end{proposition}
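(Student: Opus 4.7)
The plan is to translate the statement into a comparison of two sections of the sheaf $\mathfrak Q$ over $U$, and then apply the local characterization of sections from Remark~\ref{secremark}. By definition, $[D_M]$ is the $K$-theory class in $K_{i+1}({\mathfrak Q}(M))$ of the symmetry $\sigma_{D_M}\in{\mathfrak Q}(M)$, and $j_U$ is (functorially) induced by the sheaf restriction map ${\mathfrak Q}(M)\to{\mathfrak Q}(U)$. So it suffices to prove the section-level identity $\sigma_{D_M}|_U = \sigma_{D_U}$ in ${\mathfrak Q}(U)$, whence applying $K_{i+1}$ gives the result.

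To prove this section-level identity, I would invoke Remark~\ref{secremark}: a section of a sheaf of $C^*$-algebras is determined by its germs, so the task reduces to verifying that for every $x\in U$, the germ of $\sigma_{D_M}|_U$ at $x$ equals the germ of $\sigma_{D_U}$ at $x$ in the stalk ${\mathfrak Q}(x)$. Since the stalk at $x$ is the direct limit over neighborhoods $V\ni x$, and we may restrict attention to $V$ with $\bar V\subseteq U$, for such $V$ the algebras ${\mathfrak Q}(V)$ built using the $V$-modules $P_V H$ are identical whether $V$ is viewed as a neighborhood inside $M$ or inside $U$; in particular the two stalks ${\mathfrak Q}(x)$ coincide canonically.

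Now fix $x\in U$ and a neighborhood $V$ as above. By the construction of $\sigma_{D_M}$ we may choose an essentially self-adjoint differential operator $D'$ which agrees with $D_M$ on $V$; since $V\subseteq U$, this same $D'$ also agrees with $D_U=D_M|_U$ on $V$. Thus the class of $\chi(D')$ in ${\mathfrak Q}(V)$ represents both $\sigma_{D_M}(x)$ and $\sigma_{D_U}(x)$, and the two germs are equal. This is really the crux of the argument, but it is essentially immediate: the noncommutative symbol is defined via purely local data, so it commutes with localization.

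The main (very mild) obstacle is the bookkeeping surrounding Hilbert space identifications: the sheaf over $M$ is built from the global module $L^2(S)$, while the sheaf over $U$ is built from $P_U L^2(S) = L^2(S|_U)$. The point that makes this a non-issue is the earlier definition of ${\mathfrak C}(V),{\mathfrak D}(V),{\mathfrak Q}(V)$ on $P_V H$, which depends only on the open set $V$ and not on the ambient space; once this is noted the germs agree tautologically, and Remark~\ref{secremark} together with functoriality of $K$-theory finishes the proof.
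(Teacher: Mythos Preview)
Your argument is correct and follows essentially the same approach as the paper's own proof, which consists of the two sentences: the noncommutative symbol at $x$ depends only on the local behavior of the operator near $x$, and $D_M$ and $D_U$ agree on the neighborhood $U$ of any $x\in U$. You have simply unpacked this reasoning more carefully, making explicit the reduction to a section-level identity and the Hilbert-space bookkeeping. One small quibble: the fact you invoke---that two sections agreeing on all stalks must coincide---is Proposition~\ref{locsec} rather than Remark~\ref{secremark} (the latter is about \emph{constructing} a section from compatible germs), but this does not affect the validity of the argument.
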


\begin{proof} By its construction, the noncommutative symbol of an (elliptic) operator at a point $x$ depends only on the behavior of that operator in a neighborhood of $x$.  But, for $x\in U$, $D_U$ and $D_M$ agree on a neighborhood of $x$ (namely, $U$). \end{proof}

\subsection{Pairs of operators and relative homology}
We now consider the relative version of the foregoing discussion, which underlies the ``relative index theorem'' of~\cite{GrLa,JR12} and elsewhere.  The simplest example is of a manifold carrying \emph{two} elliptic operators that agree outside of some closed subset.   More generally, let $X$ be a locally compact metrizable space and $Z$ a closed subset.  We consider the following \emph{relative elliptic data} over $(Z,X)$:
\begin{enumerate}[(a)]
\item A pair of manifolds $M_1, M_2$ equipped with proper continuous \emph{control maps} $c_1,c_2$ to $X$.
\item On each manifold $M_k$ an elliptic operator $D_k$ (symmetric, first order, differential, in accordance with our standing assumptions) operating on sections of a bundles $S_k$ (possibly graded).
\item A diffeomorphism $h\colon W_1\to W_2$, where $W_k = c_k^{-1}(X\setminus Z)$, commuting with the control maps and covered by a bundle isomorphism $S_{1|W_1} \to S_{2|W_2}$ (preserving the gradings, if any) which intertwines the restrictions $D_{1|W_1} $ and $D_{2|W_2}$.
\end{enumerate}
We want to construct from this data a $K$-homology class for $Z$ that measures the ``difference'' between the homology classes of $D_1$ and $D_2$.

To carry out the construction, let $H_1$ and $H_2$ denote the Hilbert spaces $L^2(M_1,S_1)$ and $L^2(M_2,S_2)$.  Via the control maps, we can consider them as $X$-modules.  Let $Q_k = {\mathfrak Q}(X;H_k)$, for $k=1,2$, and let $J_k\lhd Q_k$ denote the ideal ${\mathfrak Q}_X(Z;H_k)$.

Write $H_k = L^2(W_k,S_k)\oplus L^2(M_k\setminus W_k, S_k)$ and let $v\colon H_1\to H_2$ be an isometry which is induced by the diffeomorphism $h$ on the first factor of  the direct sum and is zero on the second factor.  Then $\Ad(v): T\mapsto v^*Tv$ maps $\Bounded(H_1)$ to $\Bounded(H_2)$ and we have

\begin{lemma} The homomorphism $\Ad(v)$ induces an isomorphism \[\alpha\colon Q_1/J_1 \to Q_2/J_2.\] \end{lemma}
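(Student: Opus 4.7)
The plan is to verify that $\Ad(v)$ descends to a well-defined $*$-homomorphism $\alpha$, and then to prove $\alpha$ is an isomorphism by upgrading it to a morphism of sheaves on $X$ and checking isomorphy stalkwise via Proposition~\ref{reliso}.

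The first key observation is that although $v$ is merely an isometry, it intertwines the $C_0(X)$-representations: $v\rho_1(f) = \rho_2(f)v$ for every $f\in C_0(X)$. On $L^2(W_1,S_1)$ this follows from the assumption $c_2\circ h = c_1|_{W_1}$ together with the bundle isomorphism covering $h$; on the orthogonal complement $L^2(M_1\setminus W_1,S_1)$ both sides vanish because $v$ does. Since $v^*v = 1_{H_1}$, the map $\Ad(v)\colon T\mapsto vTv^*$ is a $*$-homomorphism $\Bounded(H_1)\to\Bounded(H_2)$, and the commutator identity $[\Ad(v)(T),\rho_2(f)] = v[T,\rho_1(f)]v^*$ shows at once that $\Ad(v)$ sends ${\mathfrak C}(X;H_1)$ into ${\mathfrak C}(X;H_2)$ and ${\mathfrak D}(X;H_1)$ into ${\mathfrak D}(X;H_2)$. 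Running the same identity with $f$ vanishing on $Z$ gives ${\mathfrak D}_X(Z;H_1)\to{\mathfrak D}_X(Z;H_2)$, so $\alpha\colon Q_1/J_1\to Q_2/J_2$ is well defined.

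To prove $\alpha$ is an isomorphism I would assemble this construction into a sheaf morphism. The assignments $U\mapsto {\mathfrak Q}(U;P_U H_k)/{\mathfrak Q}_U(U\cap Z;P_U H_k)$ are special sheaves on $X$, combining Theorem~\ref{qsheaf} with the sheaf property of ${\mathfrak Q}_\bullet(Z,{\mathscr C})$ established immediately above. The intertwining $vP_U^{(1)} = P_U^{(2)}v$ (a strong-operator-limit consequence of $v\rho_1=\rho_2 v$) makes $\Ad(v)$ compatible with the sheaf restriction maps $T\mapsto P_U T P_U$, so we obtain a sheaf morphism whose global section map is $\alpha$. By Proposition~\ref{reliso} it suffices to verify isomorphy on stalks. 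For $x\in X\setminus Z$, pick a neighborhood $U\subseteq X\setminus Z$ of $x$; then $h$ restricts to a diffeomorphism $c_1^{-1}(U)\to c_2^{-1}(U)$, so $v|_{P_U H_1}$ is an honest unitary of $U$-modules, which manifestly induces an isomorphism ${\mathfrak Q}(U;H_1)\to{\mathfrak Q}(U;H_2)$ and hence on germs at $x$. For $x\in Z$, both stalks vanish: by Remark~\ref{ideff} applied locally, the quotient sheaf is the extension by zero from $X\setminus Z$ of the sheaf ${\mathfrak Q}(\cdot;P_{W_k}H_k)$, and extensions by zero have trivial stalks off their support.

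The main obstacle I anticipate is the sheaf-theoretic bookkeeping --- specifically, verifying that $v$ intertwines the Borel-calculus projections $P_U^{(k)}$ so that $\Ad(v)$ genuinely commutes with the sheaf restrictions, and that the quotient of sheaves is formed correctly (since quotients of sheaves in general require sheafification, although for special sheaves of $C^*$-algebras the partition-of-unity gluing of Lemma~\ref{puglue} should make the naive quotient presheaf already a sheaf). Once this formal step is in place the two concrete stalk computations complete the argument.
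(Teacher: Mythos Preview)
Your proposal takes a much longer route than the paper, and it contains two genuine slips.

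First, you assert both that $v$ vanishes on $L^2(M_1\setminus W_1,S_1)$ and that $v^*v=1_{H_1}$; these are incompatible unless $W_1=M_1$.  In fact $v$ is only a \emph{partial} isometry (with $v^*v=P_{W_1}$ and $vv^*=P_{W_2}$), so $T\mapsto vTv^*$ is \emph{not} multiplicative on $\Bounded(H_1)$.  The paper's one-line argument is exactly the remedy: $1-vv^*$ is the Borel projection on $H_2$ associated to $Z$, which lies in ${\mathfrak D}_X(Z;H_2)$ and hence represents an element of $J_2$, and symmetrically $1-v^*v$ represents an element of $J_1$.  Once $v$ is a unitary modulo the ideals, $\Ad(v)$ and $\Ad(v^*)$ are mutually inverse $*$-homomorphisms on the quotients, and there is nothing more to prove.

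Second, your stalk computation at points of $Z$ is wrong.  By Remark~\ref{ideff} applied over each open $U$, the quotient presheaf is $U\mapsto{\mathfrak Q}(U\setminus Z)$; this is the \emph{direct image} along the open inclusion $X\setminus Z\hookrightarrow X$, not the extension by zero, and its stalk at $x\in Z$ is $\displaystyle\lim_{U\ni x}{\mathfrak Q}(U\setminus Z)$, which is typically nonzero (consider $Z=\{x\}$).  Your sheaf argument can be repaired --- $v$ restricts to an honest unitary $P_{U\setminus Z}H_1\to P_{U\setminus Z}H_2$ for every open $U$, so each map ${\mathfrak Q}(U\setminus Z;H_1)\to{\mathfrak Q}(U\setminus Z;H_2)$ is already an isomorphism and hence so is the colimit --- but notice that taking $U=X$ in this last sentence already finishes the proof without any appeal to stalks or Proposition~\ref{reliso}.
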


\begin{proof} It is only necessary to observe that $v^*v-1\in J_2$. \end{proof}

\begin{remark} Note, in fact, that $A_k/J_k = {\mathfrak Q}(X\setminus Z; H_k)$, via the isomorphism of Remark~\ref{ideff}. \end{remark}

Now, following~\cite{JR12}, let $A$ be the ``double''
\[ A = \{(a_1,a_2): a_k \in Q_k, \ \alpha[a_1] = [a_2] \in A_2/J_2. \]
The algebra $A$ fits into a pull-back diagram
\[  \xymatrix{ A \ar[r]\ar[d] & A_2 \ar[d]\\
A_1\ar[r] & A_1/J_1=A_2/J_2 } \]
There is a short exact sequence
\[ 0 \to J_2 \to A \to A_1 \to 0 \]
with maps $j \mapsto (0,j)$, $(a_1,a_2) \mapsto [a_1]$,
and this exact sequence is split by the diagonal map $a_1 \mapsto (a_1, \Ad(v) a_1)$.  Thus there is a canonical isomorphism
\begin{equation}\label{can}K_*(A) \cong K_*(J_2) \oplus K_*(A_1).\end{equation}
  And we note that by the excision theorem in $K$-homology, $K_{*+1}(J_2) \cong K_*(Z)$, the $K$-homology of the closed subset $Z$.

Now let $D_1$ and $D_2$ be the elliptic operators from the given set of relative data. Their noncommutative symbols $\sigma_{D_1}$ and $\sigma_{D_2}$ define symmetries in $A_1$ and $A_2$ respectively.   Moreover, the images of these symmetries in $A_k/J_k = {\mathfrak Q}(X\setminus Z; H_k)$ are the noncommutative symbols of the restrictions of $D_k$ to the $W_k$: since these operators agree here, the corresponding symmetries also agree (under the isomorphism $\alpha$).  In other words, $(\sigma_{D_1},\sigma_{D_2})$ defines a symmetry $F$ in the double algebra $A$.

\begin{definition} The component in $K_{*+1}(J_2) = K_*(Z)$ (under the isomorphism of Equation~\ref{can}) of the $K$-theory class of this symmetry $F$ is called the \emph{relative homology class} of the given set of relative elliptic data. \end{definition}

As in the absolute case (Proposition~\ref{restrict}), the construction is functorial under open inclusions $U\subseteq X$, provided that $Z\subseteq U$; the proof is the same.  Since the receiving group $K_*(Z)$ remains unchanged under such open inclusions, we obtain

\begin{proposition} The relative homology class of a set of relative elliptic data is unchanged if we restrict the data to any neighborhood of $Z$.  Consequently, this relative homology class only depends on the behavior of the data in a neighborhood of $Z$. \end{proposition}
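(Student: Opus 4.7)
The strategy is to show the construction is natural with respect to restriction to an open $U \supseteq Z$, and that this naturality reduces to the identity on the target $K_*(Z)$. Fix such a $U$ and let $N_k = c_k^{-1}(U)$: these are open submanifolds with properly restricted control maps, the operators $D_k|_{N_k}$ are still elliptic, and since $W_k \subseteq N_k$ the diffeomorphism $h$ already lives over $W_k$ and hence restricts without change. Thus the restricted data is a valid set of relative elliptic data over $(Z,U)$.

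Next, the sheaf $\mathfrak Q$ supplies contractive $*$-homomorphisms $Q_k(X) \to Q_k(U)$ which, by construction, carry the ideals $J_k(X) = {\mathfrak Q}_X(Z;H_k)$ into the analogous ideals $J_k(U)$. By Proposition~\ref{restrict}, these restriction maps send the noncommutative symbol $\sigma_{D_k}$ to the noncommutative symbol of $D_k|_{N_k}$; and since the isometry $v$ built from $h$ already lives over $X\setminus Z$, the isomorphism $\alpha$ is compatible with restriction. Consequently we obtain a morphism of double algebras $A \to A'$ respecting the pull-back diagram and sending the symmetry $F$ to the corresponding symmetry $F'$. Taking $K$-theory and invoking naturality of the split short exact sequence yields a commutative square in which the $K_{*+1}(J_2)$-component of $[F]$ maps to the $K_{*+1}(J_2')$-component of $[F']$.

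The decisive step is that, after identifying both of these groups with $K_*(Z)$ via the excision isomorphism of Remark~\ref{excis}, the restriction map becomes the identity. This rests on the observation noted in the paragraph following the relative version of Proposition~\ref{forgetcoarse}: the relative sheaf ${\mathfrak Q}_\bullet(Z;H_2)$ has vanishing stalks off $Z$. For any such sheaf $\mathfrak A$, the restriction ${\mathfrak A}(X) \to {\mathfrak A}(U)$ is an isomorphism whenever $Z \subseteq U$: injectivity follows from Proposition~\ref{locsec} (a section whose restriction to $U$ vanishes has all stalks zero), and surjectivity follows by applying the gluing axiom to the cover $\{U, X\setminus Z\}$, where the section over $X\setminus Z$ is forced to be zero (again by Proposition~\ref{locsec}) and automatically agrees on overlaps with any given section over $U$.

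The main obstacle I expect is the final naturality check for excision itself: one must trace through the isomorphism $K_{*+1}({\mathfrak Q}_X(Z))\cong K_{*}(Z)$ provided by Remark~\ref{excis} (which factors through Remark~\ref{ideff} and the definition of $K$-homology) and confirm that restricting to $U$ on the left corresponds to the identity on $K_*(Z)$ on the right, rather than to some non-trivial automorphism. Granted that, the commutative diagram of the previous paragraph forces the two relative homology classes to coincide in $K_*(Z)$, and the second assertion of the proposition (dependence only on behavior near $Z$) follows because any two neighborhoods of $Z$ have a common refinement containing $Z$.
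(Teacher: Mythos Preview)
Your proposal is correct and follows essentially the same route as the paper: functoriality of the construction under open inclusions $U\supseteq Z$ (mirroring Proposition~\ref{restrict}), combined with the observation that the receiving group $K_*(Z)$ is unchanged. The paper's proof is a two-sentence sketch that simply asserts both of these points; you have filled in substantially more detail, in particular giving a sheaf-theoretic argument (vanishing stalks off $Z$, plus Proposition~\ref{locsec} and gluing) for why the restriction $J_2\to J_2'$ is an isomorphism, and flagging the excision-naturality check that the paper leaves implicit.
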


Compare Gromov and Lawson's formulation of their relative index theorem,~\cite{GrLa}.  A more general relative index theorem in coarse geometry, based on the proposition above, can be found in~\cite{JR33}.

\end{document}